\newtheorem{theorem}{Theorem}
\newtheorem{lemma}[theorem]{Lemma}
\newtheorem{proposition}[theorem]{Proposition}
\newtheorem{question}[theorem]{Question}
\theoremstyle{definition}
\newtheorem{definition}[theorem]{Definition}
\theoremstyle{remark}
\newtheorem{remark}[theorem]{Remark}
\tikzset{
	point/.style={draw=black,fill=black,opacity=1,circle,outer sep=0pt,inner sep=0,minimum size=2},
	dot/.style={draw=black,fill=black,opacity=0,circle,outer sep=0pt,inner sep=0}
}
\tikzset{
	negated/.style={
        decoration={markings,
            mark= at position 0.5 with {
                \node[transform shape] (tempnode) {$\backslash$};
            }
        },
        postaction={decorate}
    }
}
\tikzset{
	brace/.style={decoration={calligraphic brace,amplitude=5pt}, decorate, line width=1.25pt}
}
\definecolor{pallette_orange}{RGB}{230, 159, 0}
\definecolor{pallette_skyblue}{RGB}{86, 180, 233}
\definecolor{pallette_bluishgreen}{RGB}{0, 158, 115}
\definecolor{pallette_yellow}{RGB}{240, 228, 66}
\definecolor{pallette_blue}{RGB}{0, 114, 178}
\definecolor{pallette_vermillion}{RGB}{213, 94, 0}
\definecolor{pallette_reddishpurple}{RGB}{204, 121, 167}
\definecolor{pallette0}{RGB}{0, 0, 0}
\definecolor{pallette1}{RGB}{230, 159, 0}
\definecolor{pallette2}{RGB}{86, 180, 233}
\definecolor{pallette3}{RGB}{0, 158, 115}
\definecolor{pallette4}{RGB}{240, 228, 66}
\definecolor{pallette5}{RGB}{0, 114, 178}
\definecolor{pallette6}{RGB}{213, 94, 0}
\definecolor{pallette7}{RGB}{204, 121, 167}
\colorlet{seq0}{black}
\definecolor{seq1}{HTML}{DC3220}
\colorlet{seq2}{pallette_blue}
\colorlet{seq3}{pallette_orange}
\colorlet{seq4}{pallette_skyblue}
\colorlet{seq5}{pallette_bluishgreen}
\colorlet{seq6}{pallette_yellow}
\definecolor{seq7}{HTML}{0C7BDC}
\definecolor{seq8}{HTML}{994F00}
\colorlet{seq9}{black!60!white}
\definecolor{seq10}{HTML}{1AFF1A}
\definecolor{seq11}{HTML}{4B0092}
\DeclareMathOperator{\Aut}{\mathrm{Aut}}
\newcommand{\cont}{\mathfrak{c}}
\newcommand{\ZFC}{\ensuremath{\mathrm{ZFC}}}
\newcommand{\restr}{\mathord{\upharpoonright}}
\newcommand{\Shift}{\mathrm{Shift}}
\newcommand{\In}{\mathrm{In}}
\let\P\undefined\DeclareMathOperator{\P}{\mathcal{P}}
\DeclareMathOperator{\dom}{\mathrm{dom}}
\DeclareMathOperator{\rank}{\mathrm{rank}}
\newcommand{\R}{\ensuremath{\mathbb{R}}}
\newcommand{\Q}{\ensuremath{\mathbb{Q}}}
\newcommand{\Z}{\ensuremath{\mathbb{Z}}}
\newcommand{\A}{\ensuremath{\mathcal{A}}}
\newcommand{\@usstar}[1]{{\ua}\left(#1\right)}
\newcommand{\@usnostar}[1]{{\ua}(#1)}
\newcommand{\us}{\@ifstar{\@usstar}{\@usnostar}}
\newcommand{\@dsstar}[1]{{\da}\left(#1\right)}
\newcommand{\@dsnostar}[1]{{\da}(#1)}
\newcommand{\ds}{\@ifstar{\@dsstar}{\@dsnostar}}
\newcommand{\@udsstar}[1]{{\uda}\left(#1\right)}
\newcommand{\@udsnostar}[1]{{\uda}(#1)}
\newcommand{\uds}{\@ifstar{\@udsstar}{\@udsnostar}}
\newcommand{\@Usstar}[1]{{\Ua}\left(#1\right)}
\newcommand{\@Usnostar}[1]{{\Ua}(#1)}
\newcommand{\Us}{\@ifstar{\@Usstar}{\@Usnostar}}
\newcommand{\@Dsstar}[1]{{\Da}\left(#1\right)}
\newcommand{\@Dsnostar}[1]{{\Da}(#1)}
\newcommand{\Ds}{\@ifstar{\@Dsstar}{\@Dsnostar}}
\newcommand{\@Udsstar}[1]{{\Uda}\left(#1\right)}
\newcommand{\@Udsnostar}[1]{{\Uda}(#1)}
\newcommand{\Uds}{\@ifstar{\@Udsstar}{\@Udsnostar}}
\newcommand{\ep}{\epsilon}
\newcommand{\lam}{\lambda}
\newcommand{\Lam}{\ensuremath\Lambda}
\newcommand{\Lra}{\Leftrightarrow}
\newcommand{\ua}{\uparrow}
\newcommand{\da}{\downarrow}
\newcommand{\uda}{\updownarrow}
\newcommand{\Ua}{\Uparrow}
\newcommand{\Da}{\Downarrow}
\newcommand{\Uda}{\Updownarrow}
\newcommand{\es}{\varnothing}
\newcommand{\sse}{\subseteq}
\newcommand{\partto}{\rightharpoonup}
\newcommand{\ol}{\overline}
\newcommand{\bb}{\mathbb}
\newcommand{\defeq}{\vcentcolon=}
\DeclarePairedDelimiter{\ab}{\langle}{\rangle}
\DeclarePairedDelimiter{\abs}{|}{|}
\newcommand{\contradiction}{\noindent
	\begin{tikzpicture}[x=0.4ex,y=0.4ex]
		\draw[line width=.15ex] (0,0) -- (1,2) -- (0,2) -- (1,4)
		(0.95,0.32) -- (0,0) -- (-0.32,0.95);
	\end{tikzpicture}\hspace*{0.2em}
}
\renewcommand{\leq}{\leqslant}
\renewcommand{\geq}{\geqslant}
\title{Uniform, rigid branchwise-real trees}
\author[*]{Sam Adam-Day}
\affil[*]{Mathematical Institute, University of Oxford, Andrew Wiles Building, Radcliffe Observatory Quarter, Woodstock Road, Oxford, OX2 6GG, United Kingdom; \href{mailto:adamday@maths.ox.ac.uk}{\nolinkurl{adamday@maths.ox.ac.uk}}}
\date{\today}
\begin{document}

	\maketitle

	\abstract{A branchwise-real tree is a partial order which is a tree and in which every branch is isomorphic to a real interval. I give constructions of such trees which are both rigid (i.e. without non-trivial order-automorphisms) and uniform (in two different senses). Specifically, I show that there is a rigid branchwise-real tree in which every branching point has the same degree, one in which every point is branching and of the same degree, and finally one in which every point is branching of the same degree and which admits no order-preserving function into the reals. Trees are grown iteratively in stages, and a key technique is the construction (in \ZFC) of a family of colourings of $(0,\infty)$ which is `sufficiently generic', using these colourings to determine how to proceed with the construction.}

	\renewcommand{\thefootnote}{}
	\footnote{\emph{Keywords}: branchwise-real tree, tree automorphism, rigid, uniform, forcing, R-tree}
	\footnote{\emph{2020 Mathematics Subject Classification}: 03E04, 06A07, 20E08, 54F05}
	\renewcommand{\thefootnote}{\arabic{footnote}}
	\addtocounter{footnote}{-2}


\section{Introduction}

This article concerns the construction of branchwise-real trees which are rigid — i.e.\@ without non-trivial order-automorphisms. Branchwise-real trees were first defined in \cite{favre2002valuative} and further elaborated on in \cite{gradability-paper-published}.\footnote{In \cite{favre2002valuative} branchwise-real trees are called `non-metric trees'.} They are tree partial orders in which every branch is order-isomorphic to a real interval, and in which every two elements have a meet (see \cref{sec:background} for definitions).

Branchwise-real trees were originally motivated from the study of \R-trees. Informally, an \R-tree is a metric space tree in which every point is permitted to be branching. They play a role in geometric group theory, in which one considers some infinite group acting by isometries on an \R-tree. For more information, see \cite{bestvina1997real,mayer92} and the references contained in \cite{Fabel15}. Now, fixing any point $p$ in an \R-tree, one can consider the \emph{cut-point order}: the set of paths through the tree from $p$, ordered by path extension. The resulting partial order is a branchwise-real tree \cite[p.~50]{favre2002valuative}. Conversely, any branchwise-real tree which admits an order-preserving function into the reals is the cut-point order of some \R-tree \cite{favre2002valuative,gradability-paper-published}. The present study of the order-automorphisms of branchwise-real trees is thus motivated by the study of the isometries of \R-trees.

A different motivation comes the investigation of the automorphisms of well-stratified trees (tree partial orders in which every branch is isomorphic to an ordinal) \cite{gaifmanspecker64,jensen69,10.2307/1996262,avraham79,Abraham1985IsomorphismTO,10.2307/40378073}. If $X$ is a well-stratified tree without uncountable branches, we can turn it into a branchwise-real tree by taking its `road space', essentially replacing every node with a copy of the interval $[0,1)$, so that every branch becomes isomorphic to a real interval \cite{REMARKSONTHENORMALMOORESPACEMETRIZATIONPROBLEM,gradability-paper-published}. Every automorphism of $X$ can then be extended to an automorphism of its road space (but not vice versa). 

However, the class of branchwise-real trees is more general than that obtained by taking the road spaces of well-stratified trees without uncountable branches. This is because any point can be branching, so that the structure of the branching points can be far from well-stratified. Consider for instance the `comb' structure given by taking a `shaft' $[0,1]$ and adding a `tooth' — a copy of $(0,1]$ — as a new branch from each point on the shaft. See \cref{fig:comb} for a picture. Every point along the shaft except the maximal one is branching of degree $2$. Moreover, the task of constructing a rigid branchwise-real tree is more challenging than in the well-stratified case. Indeed, any part of the tree which looks like a real interval and which contains no branching points allows for an easy automorphism which fixes the rest of the tree and permutes the interval in some non-trivial way.

\begin{figure}[hb]
    \centering
    \begin{tikzpicture}
        \draw[ultra thick] (0,0) -- node[above, sloped] {shaft} (0,4);
        \draw (0,0) -- node[above, sloped] {tooth} ++(2,2);
        \node at (0.8,2) {$\vdots$};
        \draw (0,2.) -- node[above, sloped] {tooth} ++(2,2);
        \node at (0.8,4) {$\vdots$};
        \draw (0,4) -- node[above, sloped] {tooth} ++(2,2);
    \end{tikzpicture}
    \caption{A representation of the comb branchwise-real tree, consisting of the shaft and a few example teeth}
    \label{fig:comb}
\end{figure}
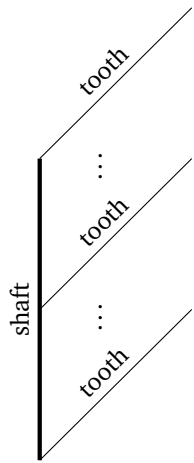

In this article, I construct examples of rigid branchwise-real trees subject to two uniformity conditions, the latter stronger than the former. First, I construct one in which every branching node has the same degree $\kappa$, for $2 \leq \kappa \leq \cont$. I call this condition `weakly uniformly $\kappa$-branching'. Second, using a different technique, I construct one in which every node is branching and of the same degree $\kappa$, for $2 \leq \kappa \leq \cont^+$. This condition I call `uniformly $\kappa$-branching'. This second tree admits an order-preserving function into the reals, and thus corresponds to the cut-point order of some \R-tree. The technique can be further adapted to produce such a uniformly $\kappa$-branching, rigid branchwise real tree which does not admit an order-preserving function into the reals, and thus is not related to an \R-tree. These results manifest the phenomenon of the marriage of two seemingly opposing notions: rigidity and uniformity. As such they continue in the vein of similar results found for well-stratified trees \cite{avraham79, 10.2307/40378073}.

All trees here are grown recursively in stages. We start with a single point: the root. At successor stages, we choose some number of points added at the previous stage, and add $(\kappa-1)$ many new `spines' — copies of the interval $(0,\infty)$ — above. (Throughout $\kappa-1$ denotes $n-1$ if $\kappa =n$ finite and $\kappa$ otherwise). We never extend directly above a spine once it has been added. At limit stages there may be new branches appearing through the tree, consisting of portions added at each of the previous stages. We decide which of these to extend by adding a new point on top.

A common technique used throughout when growing rigid trees is to first produce a collection of colourings of the interval $(0, \infty)$. As the tree grows, we take for each new spine added a different, unused colouring and lay it along the spine. In this way, we also build up a colouring of the whole tree as we go. These colourings are then used to determine which points will form the bases of new spines at successor stages, and which new branches to extend at limit stages. The collection of colourings of $(0, \infty)$ is carefully constructed so as to enable the desired properties of the final tree.

The constructions of the trees in this article, though they occur in \ZFC, have strong forcing flavour to them. Indeed, I first built these rigid trees using forcing notions, before realising that related constructions could be carried out in \ZFC. I will briefly elaborate on two of these forcing notions. For more details on forcing, see \cite{comb-set-theory-2017} and \cite{jech,kunen}. However, knowledge of forcing is not required to understand any of the \ZFC\ constructions in this article.

\section{Background}
\label{sec:background}

Let me begin by fixing some terminology and notation relating to partial orders. Let $P$ be a partial order. A \emph{chain} in $P$ is a linearly-ordered subset. A \emph{branch} through $P$ is a maximal chain. A \emph{ray} is a final segment of a branch. A subset $Q \sse P$ is \emph{coinitial} if for every $x \in P$ there is $y \in Q$ such that $y \leq x$. Elements $x,y \in P$ are \emph{comparable} if $x \leq y$ or $y \leq x$. Two subsets $Q,R \sse P$ are \emph{comparable} if there is $x \in Q$ and $y \in R$ such that $x$ and $y$ are comparable. An \emph{antichain} is a set of pairwise incomparable elements. For any $x \in P$ let:
\begin{equation*}
	\ds x \defeq \{y \in X \mid y \leq x\}, \qquad \us x \defeq \{y \in P \mid y \geq x\}
\end{equation*}
For any $x < y$ in $P$, define:
\begin{equation*}
	[x,y] \defeq \{z \in P \mid x \leq z \leq y\}
\end{equation*}
Define the other intervals $[x,y)$, $(x,y]$ and $(x,y)$ analogously. A function $f \colon P \to Q$ between partial orders is \emph{order-preserving} if whenever $x < y$ we have $f(x) < f(y)$. I will also call such a function a \emph{$Q$-grading} of $P$. An \emph{isomorphism} between $P$ and $Q$ is a bijection $f \colon P \to Q$ which is order-preserving with order-preserving inverse.

I can now introduce the main object of study: branchwise-real trees. I follow the presentation given in \cite{gradability-paper-published}.

\begin{definition}\label{def:tree order}
	A \emph{tree order} is a partial order $X$ such that the following conditions hold.
	\begin{enumerate}[label=(TO\arabic*), leftmargin=*, labelindent=3pt]
		\item\label{item:lo; def:tree order}
			For every $x \in X$ the set $\ds x$ is a linear order.
		\item\label{item:root; def:tree order}
			$X$ has a minimum element, its \emph{root}.
	\end{enumerate}
\end{definition}

\pagebreak

\begin{definition}\label{def:brot}
	A \emph{branchwise-real tree} is a tree order $X$ subject to the following extra conditions.\footnote{I make a slight change of notation compared with \cite{gradability-paper-published}, where these objects are referred to as `branchwise-real tree orders'.}
	\begin{enumerate}[label=(BR\arabic*), leftmargin=*, labelindent=3pt]
		\item\label{item:interval; def:brot}
			Every branch is order-isomorphic to a real interval.
		\item\label{item:meet-semilattice; def:brot}
			For any $x, y \in X$, the set $\{z \in X \mid z \leq x,y\}$ has a maximum element $x \wedge y$, the \emph{meet} of $x$ and $y$ (in other words, $X$ is a meet-semilattice).
	\end{enumerate}
\end{definition}

We will also meet trees in which every branch is well-ordered. Such trees will be called `well-stratified trees'; in a purely set theoretic context, we would simply say `trees'.

\begin{definition}
	A tree order $T$ is \emph{well-stratified} if every branch is order-isomorphic to an ordinal. The \emph{rank} of an element $x \in T$ is the order type of $\ds x \setminus \{x\}$.
\end{definition}

In order to define the \emph{degree} of a point in a tree order we need the notion of a connected component above that point.

\begin{definition}
	Let $X$ be a tree order and $x \in X$. A \emph{connected component above $x$} is an equivalence class of $\us x \setminus \{x\}$ under the relation:
	\begin{equation*}
		y \sim_x z \quad\Lra\quad \text{there is } w > x \text{ such that } w \leq y,z
	\end{equation*}
	Note that, when $X$ is a meet-semilattice, we have $y \sim_x z$ if and only if $y \wedge z > x$.	See \cref{fig:connected component relation} for an example illustration of this relation. I will usually drop the `connected' and refer to these as `components above $x$'. The \emph{degree}, $\deg(x)$, of $x$ is the number of components above $x$. Say that $x$ is \emph{terminal} if $\deg(x) = 0$. Say that $x$ is \emph{branching} if $\deg(x)>1$. The \emph{degree} of $X$ is the supremum of the degrees of its elements.
\end{definition}

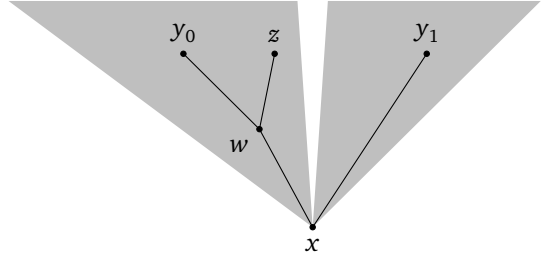
\begin{figure}[ht]
	\centering
	\begin{tikzpicture}
		\fill[lightgray] (-4,3) -- (0,0) -- (-0.2,3);
		\draw (0,0) -- (-0.7,1.3) node[point, label=below left:$w$] (w) {};
		\draw (w) -- ++(-1,1) node[point, label=above:$y_0$] (y0) {};
		\draw (w) -- ++(0.2,1) node[point, label=above:$z$] (z) {};

		\fill[lightgray] (0.2,3) -- (0,0) -- (3,3);
		\draw (0,0) -- (1.5,2.3) node[point, label=above:$y_1$] (y1) {};

		\node [point] (x) at (0,0) [label=below:$x$] {};
	\end{tikzpicture}
	\caption{An example illustrating the relation $\sim_x$ which defines components above $x$. We have $y_0 \sim_x z$ because $x < w \leq y_0, z$, but $z \not\sim_x y_1$ because there is no element above $x$ which lies below both $y_1$ and $z$.}
	\label{fig:connected component relation}
\end{figure}

The notion of \emph{continuous gradability} is important for branchwise-real trees and is the main property studied in \cite{gradability-paper-published}.

\begin{definition}\label{def:continuous grading}
	Let $X$ be a branchwise-real tree. An \R-grading $\ell \colon X \to \R$ is \emph{continuous} if for any $x < y$ in $X$ the restriction: 
	\begin{equation*}
		\ell\restr{[x,y]} \colon [x,y] \to [\ell(x),\ell(y)]
	\end{equation*}
	is an order-isomorphism. I will usually drop the `\R' and call such functions `continuous gradings'. Say that $X$ is \emph{continuously gradable} if it admits a continuous grading. 
\end{definition}

\begin{theorem}\label{thm:R grading to continuous grading}
	A branchwise-real tree is continuously gradable if and only if it is \R-gradable.
\end{theorem}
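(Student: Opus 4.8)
The forward implication is immediate: a continuous grading is in particular an \R-grading, so continuous gradability entails \R-gradability, and the whole content lies in the converse. So I would fix an \R-grading $\ell \colon X \to \R$ and build a continuous one from it. Composing with an increasing homeomorphism $\R \to (0,1)$, I may assume $\ell$ is bounded, which is the only feature of the hypothesis I will really use.

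First I would reformulate continuity in purely one-dimensional terms. Every branch $B$ through a point $p$ contains $\ds{p}$ as an initial segment, and since $B$ is order-isomorphic to a real interval, $\ds{p} \cong [0,1]$ whenever $p$ is not the root. Hence an \R-grading $\ell'$ is continuous along $B$ exactly when at each $p$ it is left-continuous, $\ell'(p) = \sup_{q < p} \ell'(q)$ — a condition depending only on $\ds{p}$ and therefore shared by every branch through $p$ — and right-continuous, $\ell'(p) = \inf\{\ell'(q) \mid q > p,\ q \in B\}$. Thus a continuous grading is precisely an order-preserving $\ell' \colon X \to \R$ with no left jumps and no right jumps into any branch, whereas the defect of the given $\ell$ is measured by its jumps $\delta^-(p) = \ell(p) - \sup_{q<p}\ell(q)$ and $\delta^+(p,B) = \inf_{q \in B,\, q>p}\ell(q) - \ell(p)$.

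The crucial observation is that a jump of $\ell$ never reflects a gap in $X$: if $\delta^+(p,B) > 0$, there is still an entire real interval of points of $B$ lying strictly between $p$ and any $q$ witnessing the infimum. The jumps are an artefact of $\ell$, not of the tree, and each branch — being order-isomorphic to a real interval — carries continuous gradings of its own in abundance. The remaining task is therefore one of coherence and boundedness: I would construct $\ell'$ from the root outward, using the levels of $\ell$ to index a dyadic refinement that assigns a positive length to every nondegenerate interval, while arranging that the length assigned below a branching point is common to all branches through it (coherence) and that the total length accumulated along any $\ds{x}$ remains bounded, so that $\ell'$ genuinely takes values in $\R$ rather than in some longer line. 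Boundedness is exactly what the hypothesis supplies, since the values of $\ell$ bound the geometric series controlling the refinement.

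The main obstacle is to keep $\ell'$ simultaneously strictly increasing and continuous across the — possibly uncountable — branching. One cannot simply replace $\ell$ by its continuous part: a strictly increasing function on a real interval can be pure jump (for an enumeration $(q_n)$ of the rationals, $t \mapsto \sum_{q_n \leq t} 2^{-n}$ is strictly increasing yet has zero continuous part), so subtracting jumps would collapse whole segments and destroy monotonicity. The correction must hence be global rather than local, and this is precisely the step where \R-gradability, as opposed to the bare real-interval structure of the individual branches, is essential: it is what guarantees that a coherent, everywhere-strictly-positive length assignment can be made with bounded total mass, and so realised inside \R.
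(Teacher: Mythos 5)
Your preliminary analysis is sound: the forward direction is indeed immediate, your reformulation of continuity as the absence of left jumps and of right jumps into branches is correct, and your warning that one cannot simply pass to the ``continuous part'' of $\ell$ (since a strictly increasing real function can be pure jump) is exactly the right obstacle to flag. But the proof stops where the theorem starts. Everything after that is a statement of intent rather than an argument: the ``dyadic refinement that assigns a positive length to every nondegenerate interval,'' arranged ``coherently'' at branching points and with ``bounded total mass,'' is never defined, and your closing claim --- that \R-gradability \emph{guarantees} such a coherent, everywhere-positive, boundedly summable length assignment exists --- is precisely the statement to be proved, not a proof of it. Your own counterexample shows why this cannot be waved through: $\ell$ may be singular on entire segments of the tree, so any length assignment read off from $\ell$ (measures of images $\ell[\ds x]$, sums over dyadic levels of $\ell$, and the like) risks assigning length zero to nondegenerate intervals and destroying strict monotonicity. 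Nothing in the sketch explains how the dyadic scheme avoids this, how it handles the possibly infinitely many jumps of $\ell$ along a single branch, or how the value $\ell'(x)$ is made independent of the branch through $x$ when branching points may be uncountable in number and dense in the order.

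For comparison, the paper's proof (deferred to \cite[Theorem~30]{gradability-paper-published}) does not build a new grading from scratch: it first establishes, via the standard fact about monotone real functions, that every discontinuity of an \R-grading is a jump and that each branch carries only countably many of them, and then \emph{repairs} the given grading by eliminating jumps one by one through a Zorn's Lemma argument on partial corrections. The countability of jumps per branch is the ingredient that makes those corrections summable and the maximal-element argument close; your proposal never isolates this fact, and it is unclear what replaces it. If your from-scratch construction could be completed --- an explicit definition of the length assignment, a proof that every nondegenerate interval receives positive length despite the singular parts of $\ell$, a proof that the accumulated lengths along each $\ds x$ converge and vary continuously, and a check of branch-independence at every point --- it would constitute a genuinely different route; as written, those four steps, which are the entire content of the theorem, are missing.
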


\begin{proof}[Proof sketch]
	See \cite[Theorem~30]{gradability-paper-published}. An arbitrary \R-grading $f$ of a branchwise-real tree $X$ may contain a number of discontinuities. Using a basic result from real analysis, it can be shown that such discontinuities must take the form of `jumps' in value, and that any branch may contain only countably many. The proof proceeds to eliminate every jump in $f$, using a Zorn's Lemma style argument.
\end{proof}

Two final pieces of notation. Throughout, tuples will be denoted using angle brackets: $\ab{a,b, \ldots}$. A partial function between sets $X$ and $Y$ will be denoted using $f \colon X \partto Y$.

\section{How to grow branchwise-real trees}

\label{sec:growing brtos}

The trees in this article are grown using an iterative process. We always start with a singleton as the `root'. At a successor step, above each point introduced in the previous stage we can add any number of new `spines'. A spine is a copy of the positive real numbers $(0,\infty)$, or the interval $(0,\infty]$, which is `terminal', in the sense that we will never extend the tree above the end of a spine. In other words, in the final tree each spine will be a ray. Call the former type of spine an \emph{open spine} and the latter a \emph{closed spine}. At a limit step, we first take the union of the previous stages. There is more to do however, since although we do not extend above spines, new branches through the tree appear at the limit, which we may choose to extend or not. A newly appearing branch contains the root, follows a stage-$1$ spine partway, then branches onto a stage-$2$ spine, and so on, cofinally in the limit. Such a branch thus consists of a little piece from each previous stage of the construction, with no final piece. If we decide to extend it, we add a new point directly above. This process is illustrated in \cref{fig:growing tree}.

\begin{figure}[ht]
	\centering
	\begin{tikzpicture}[yscale=0.8]
		\node[point, seq0] at (0,0) [label=below:{root}] (r) {};

		\begin{scope}[every path/.style={thick}]
			\draw[seq1] (r)  -- node[dot, pos=0.500] (a1) {} ++(-3,1);
			\draw[seq2] (a1) -- node[dot, pos=0.333] (a2) {} ++(1,2);
			\draw[seq3] (a2) -- node[dot, pos=0.250] (a3) {} ++(-1,2);
			\draw[seq4] (a3) -- node[dot, pos=0.200] (a4) {} ++(1,2);
			\draw[seq5] (a4) -- node[dot, pos=0.167] (a5) {} ++(-1,2);
			\draw[seq6] (a5) -- node[dot, pos=0.143] (a6) {} ++(1,2);
			\draw[seq7] (a6) -- node[dot, pos=0.125] (a7) {} ++(-1,2);
			\draw[seq8] (a7) -- node[dot, pos=0.111] (a8) {} ++(1,2);
		\end{scope}
		\node [above=10pt of a7] (ad) {$\vdots$};

		\begin{scope}[every path/.style={thick}]
			\draw[seq1] (r)  -- node[dot, pos=0.500] (b1) {} ++(3,1);
			\draw[seq2] (b1) -- node[dot, pos=0.333] (b2) {} ++(-1,2);
			\draw[seq3] (b2) -- node[dot, pos=0.250] (b3) {} ++(1,2);
			\draw[seq4] (b3) -- node[dot, pos=0.200] (b4) {} ++(-1,2);
			\draw[seq5] (b4) -- node[dot, pos=0.167] (b5) {} ++(1,2);
			\draw[seq6] (b5) -- node[dot, pos=0.143] (b6) {} ++(-1,2);
			\draw[seq7] (b6) -- node[dot, pos=0.125] (b7) {} ++(1,2);
			\draw[seq8] (b7) -- node[dot, pos=0.111] (b8) {} ++(-1,2);
		\end{scope}
		\node [above=10pt of b7] (bd) {$\vdots$};
		\node[point, seq9] [above=1pt of bd] (bw) {};
		\begin{scope}[every path/.style={thick}]
			\draw[seq10] (bw)  -- node[dot, pos=0.2] (bw1) {} ++(1,2);
			\draw[seq11] (bw1) -- node[dot, pos=0.333] (bw2) {} ++(-1,2);
		\end{scope}
	\end{tikzpicture}
	\caption{An example of growing a branchwise-real tree for $\omega+2$ steps. Spines beginning at the same vertical height are added at the same stage. Each stage is also represented using a different colour. At stage $\omega$, there are two new branches through the tree, the one on the right and the one on the left. We only extend the right, and we do so by adding a point at the limit, which is then further extended in subsequent stages.}
	\label{fig:growing tree}
\end{figure}
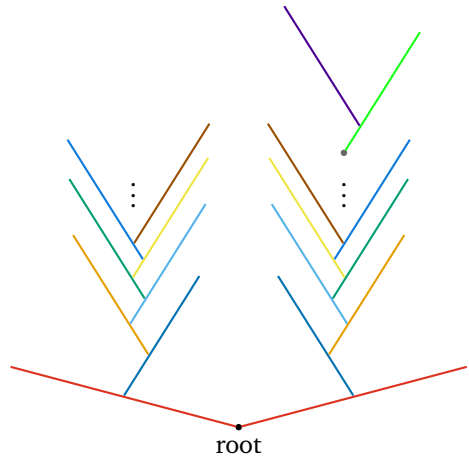

This construction may continue for $\omega_1$-many steps. Note however that since every branch must be order-isomorphic to a real interval, and there is no increasing $\omega_1$-sequence of reals, no branch through the final tree can contain a piece from every stage below $\omega_1$.

\begin{remark}
	A similar construction was used by Urysohn in \cite{UrysohnBeispielEN} (in German) to construct a metric space which is nowhere separable. Urysohn essentially followed the process outlined above, adding $\cont$-many open spines above each point added at the previous step, for $\omega$-many steps. Of course, Urysohn was constructing a topological space, not a partial order, but there is a clear correspondence between the two types of structure. This correspondence is elaborated in more detail in \cite{gradability-paper-published}. In \cite{berestovskii2019urysohn}, Urysohn's construction is defined (in English) and extended to all countable steps, by taking the Hausdorff completion at limit steps.
\end{remark}

Now, given any branchwise-real tree $X$ constructed in the fashion described above, we can naturally define a rank function $\rho \colon X \to \omega_1$, where $\rho(x)$ is the stage $\alpha$ at which $x$ was introduced. As \cref{lem:every to rank function} below demonstrates, any branchwise-real tree can be realised using the construction above, and thus admits a rank function. Note that rank functions are not unique in general. In fact, the same tree can have rank functions with a variety of ranges, as \cref{fig:alternative rank function} demonstrates.

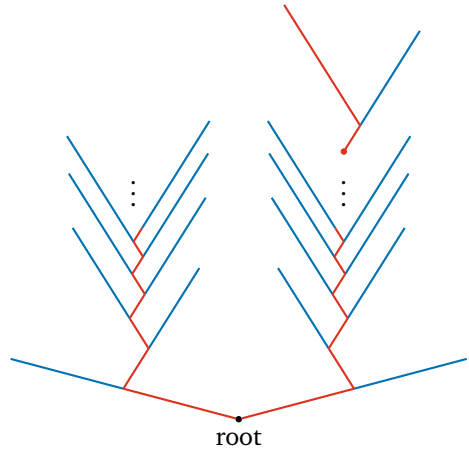
\begin{figure}[ht]
	\centering
	\begin{tikzpicture}[yscale=0.8]
		\node[point, seq0] at (0,0) [label=below:{root}] (r) {};

		\begin{scope}[every node/.style={shape=coordinate}]
			\path (r)  -- node[pos=0.500] (a1) {} ++(-3,1) node (a1t) {};
			\path (a1) -- node[pos=0.333] (a2) {} ++(1,2) node (a2t) {};
			\path (a2) -- node[pos=0.250] (a3) {} ++(-1,2) node (a3t) {};
			\path (a3) -- node[pos=0.200] (a4) {} ++(1,2) node (a4t) {};
			\path (a4) -- node[pos=0.167] (a5) {} ++(-1,2) node (a5t) {};
			\path (a5) -- node[pos=0.143] (a6) {} ++(1,2) node (a6t) {};
			\path (a6) -- node[pos=0.125] (a7) {} ++(-1,2) node (a7t) {};
			\path (a7) -- node[pos=0.111] (a8) {} ++(1,2) node (a8t) {};
		\end{scope}
		\begin{scope}[every path/.style={thick}]
			\draw[seq1] (r) -- (a1); \draw[seq2] (a1) -- (a1t);
			\draw[seq1] (a1) -- (a2); \draw[seq2] (a2) -- (a2t);
			\draw[seq1] (a2) -- (a3); \draw[seq2] (a3) -- (a3t);
			\draw[seq1] (a3) -- (a4); \draw[seq2] (a4) -- (a4t);
			\draw[seq1] (a4) -- (a5); \draw[seq2] (a5) -- (a5t);
			\draw[seq1] (a5) -- (a6); \draw[seq2] (a6) -- (a6t);
			\draw[seq1] (a6) -- (a7); \draw[seq2] (a7) -- (a7t);
			\draw[seq1] (a7) -- (a8); \draw[seq2] (a8) -- (a8t);
		\end{scope}
		\node [above=10pt of a7] (ad) {$\vdots$};

		\begin{scope}[every node/.style={shape=coordinate}]
			\path (r)  -- node[pos=0.500] (b1) {} ++(3,1) node (b1t) {};
			\path (b1) -- node[pos=0.333] (b2) {} ++(-1,2) node (b2t) {};
			\path (b2) -- node[pos=0.250] (b3) {} ++(1,2) node (b3t) {};
			\path (b3) -- node[pos=0.200] (b4) {} ++(-1,2) node (b4t) {};
			\path (b4) -- node[pos=0.167] (b5) {} ++(1,2) node (b5t) {};
			\path (b5) -- node[pos=0.143] (b6) {} ++(-1,2) node (b6t) {};
			\path (b6) -- node[pos=0.125] (b7) {} ++(1,2) node (b7t) {};
			\path (b7) -- node[pos=0.111] (b8) {} ++(-1,2) node (b8t) {};
		\end{scope}
		\begin{scope}[every path/.style={thick}]
			\draw[seq1] (r) -- (b1); \draw[seq2] (b1) -- (b1t);
			\draw[seq1] (b1) -- (b2); \draw[seq2] (b2) -- (b2t);
			\draw[seq1] (b2) -- (b3); \draw[seq2] (b3) -- (b3t);
			\draw[seq1] (b3) -- (b4); \draw[seq2] (b4) -- (b4t);
			\draw[seq1] (b4) -- (b5); \draw[seq2] (b5) -- (b5t);
			\draw[seq1] (b5) -- (b6); \draw[seq2] (b6) -- (b6t);
			\draw[seq1] (b6) -- (b7); \draw[seq2] (b7) -- (b7t);
			\draw[seq1] (b7) -- (b8); \draw[seq2] (b8) -- (b8t);
		\end{scope}
		\node [above=10pt of b7] (bd) {$\vdots$};
		\node[point, seq1] [above=1pt of bd] (bw) {};
		\begin{scope}[every node/.style={shape=coordinate}]
			\path (bw)  -- node[pos=0.2] (bw1) {} ++(1,2) node (bw1t) {};
			\path (bw1) -- node[pos=0.333] (bw2) {} ++(-1,2) node (bw2t) {};
		\end{scope}
		\begin{scope}[every path/.style={thick}]
			\draw[seq1] (bw) -- (bw1); \draw[seq2] (bw1) -- (bw1t);
			\draw[seq1] (bw1) -- (bw2t);
		\end{scope}
	\end{tikzpicture}
	\caption{An alternative construction of the tree in \cref{fig:growing tree}, which yields a different rank function. The root gets rank $0$. Both the branch which passes through every spine on the left and the branch which passes through every spine on the right have rank $1$. The rest of the tree has rank $2$. While the original construction produced $\omega+2$ ranks, this only produces $3$.}
	\label{fig:alternative rank function}
\end{figure}

\begin{definition}\label{def:rank function}
	Let $X$ be a tree order. A \emph{rank function} on $X$ is an order-preserving function $\rho \colon X \to \gamma$, where $\gamma$ is some ordinal, such that the following hold.
	\begin{enumerate}[label=(RF\arabic*), leftmargin=*, labelindent=3pt]
		\item\label{item:surjective; def:rank function}
		For every branch $B$ in $X$ the set $\{\rho(x) \mid x \in B\}$ is downwards-closed.
		\item\label{item:0 or limit; def:rank function}
		If $\alpha$ is $0$ or a limit, then $\rho^{-1}[\{\alpha\}]$ is an antichain.
		\item\label{item:successor; def:rank function}
		If $\alpha$ is a successor then $\rho^{-1}[\{\alpha\}]$ is the disjoint union of a family of incomparable rays.
	\end{enumerate}
\end{definition}

\begin{lemma}\label{lem:every to rank function}
	Every branchwise-real tree admits a rank function.
\end{lemma}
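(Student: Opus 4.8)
The plan is to construct $\rho$ by transfinite recursion, peeling $X$ apart into levels that mirror the stage-by-stage growth described above: the root, then successive `spine' levels, with `limit point' levels interposed at limit ordinals. Concretely, I would define level sets $L_\alpha$ together with their cumulative unions $Y_{<\alpha} \defeq \bigcup_{\beta<\alpha} L_\beta$, maintaining the invariant that each $Y_{<\alpha}$ is downward-closed and closed under suprema of its chains. Put $L_0 \defeq \{r\}$ for the root $r$. At a successor stage $\alpha = \beta+1$: for each $p \in L_\beta$ and each component $C$ above $p$ with $C \cap Y_{<\alpha} = \es$, choose (by Zorn's Lemma) a maximal ray $R_{p,C}$ emanating from $p$ into $C$ — that is, a maximal chain of $C$, which since branches are real intervals is a final segment of a branch of $X$, hence a genuine ray — and set $L_\alpha \defeq \bigcup R_{p,C}$, the union over all such $p$ and $C$. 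At a limit stage $\lambda$, set $L_\lambda \defeq \{x \in X \setminus Y_{<\lambda} : \ds x \setminus \{x\} \sse Y_{<\lambda}\}$, the points all of whose strict predecessors are already ranked. Finally let $\rho(x) \defeq \alpha$ whenever $x \in L_\alpha$. Note that every successor-level ray attaches by construction to a point of the \emph{immediately} preceding level, which is what will make the branch condition clean.

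Granting that this recursion is well-defined and exhausts $X$, the three defining properties fall out by design. Monotonicity of $\rho$ (in the weak sense appropriate here, since $\rho$ is necessarily constant along each ray) follows once one checks, by induction on $\alpha$, that each $Y_{<\alpha}$ is downward-closed: then $x \le y$ forces $x$ to be ranked no later than $y$. Property (RF2) is immediate: $L_0$ is a singleton, and if $x < x'$ lay in a common $L_\lambda$ with $\lambda$ a limit then $x \in \ds{x'}\setminus\{x'\} \sse Y_{<\lambda}$, contradicting $x \in L_\lambda$; so these levels are antichains. For (RF3), the rays making up a successor level $L_\alpha$ are pairwise incomparable: distinct components above a common $p$ are incomparable by definition, and rays hanging off distinct points of $L_\beta$ — which is itself an antichain or a union of incomparable rays — are separated by the tree structure. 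For (RF1), if $x \in L_\alpha$ lies on a branch $B$, then $\ds x \sse B$ and, using downward-closedness together with the immediate-predecessor attachment, an induction on $\alpha$ shows that $B$ realises every rank $\le \alpha$; hence $\{\rho(x) : x \in B\}$ is an initial segment of the ordinals.

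The crux — and the step I expect to be the main obstacle — is to show the recursion is everywhere-defined, i.e.\ that $\bigcup_\alpha Y_{<\alpha} = X$ (termination at some ordinal $\gamma$ being then automatic, as the $Y_{<\alpha}$ increase inside the set $X$). Here the hypothesis that every branch is a real interval does the essential work. One first verifies that the set $R$ of ever-ranked points is downward-closed, so its complement is upward-closed; were the complement non-empty, pick $x$ in it and examine the chain $\ds x$. The ranked points below $x$ form an initial sub-interval, and I would argue its frontier must rise all the way to $x$: whenever a point $c < x$ is ranked at some stage $\beta$, the \emph{fresh} component above $c$ through which $\ds x$ continues receives a chosen ray at stage $\beta+1$, and either that ray carries the next stretch of $\ds x$ or $\ds x$ diverges from it at a strictly higher point. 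Iterating, the infimum of the unranked part of $\ds x$ strictly increases; since $\ds x$ is a real interval, this order-embeddable family of stages is countable and converges, and at the corresponding limit stage all of $\ds x \setminus \{x\}$ is ranked — so $x$ itself enters some $L_\lambda$, a contradiction. The same real-interval observation caps the ranks on any branch by a countable ordinal, forcing global termination. Making this `frontier rises to $x$' argument fully rigorous — in particular controlling the interaction between the arbitrary maximal-ray choices and a possibly dense set of branch points, as in the comb of \cref{fig:comb} — is the delicate heart of the proof.
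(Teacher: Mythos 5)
Your construction is correct in outline, but it takes a genuinely different — and more laborious — route than the paper. The paper's proof is a single Zorn's Lemma argument: take a \emph{maximal} partial rank function $\rho$ with downwards-closed domain $Y \sse X$; if some $x$ is unranked, maximality forces every limit of ranked points to be ranked, so $\ds x \cap Y$ has a maximum $y$, and one extends $\rho$ by laying rank $\rho(y)+1$ along a maximal ray through the component above $y$ containing $x$ — contradicting maximality. Because this is a contradiction argument about a maximal object, the paper never has to prove termination, and the existence of the frontier maximum comes for free; these are exactly the two places where you must work (your exhaustion argument and your limit levels). What your version buys in exchange is a more structured output: a rank function in which each successor-level ray attaches to a point of the \emph{immediately} preceding level, i.e.\ a ranking that literally witnesses the stage-by-stage growing process of \cref{sec:growing brtos}, which the paper's maximal $\rho$ need not do. Two remarks on your flagged crux. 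First, it is genuine but fillable by one observation you never quite state: if $c$ is the maximal ranked point below $x$, then the component $C$ above $c$ containing $x$ contains \emph{no} ranked point at all — for if $z \in C$ were ranked, then $z \wedge x > c$ would be ranked too (downward closure of the ranked set) and would lie below $x$, contradicting maximality of $c$. This single meet argument gives both your freshness claim and, applied to the chosen ray $R \sse C$, the fact that $\ds x \cap R$ has a maximum (meets of $x$ with points of $R$ land in $R$, by maximality of $R$ as a chain in $C$, and maximal chains are closed under suprema since branches are Dedekind complete). Second, your termination argument is right in substance — a strictly increasing, ordinal-indexed family of frontier points in $\ds x$ must be countable because $\ds x$ embeds in $\R$ — but the conclusion is not that all of $\ds x \setminus \{x\}$ becomes ranked at a single limit stage: the frontier may converge short of $x$, be captured at a limit level, and restart many times. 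The correct statement is simply that $x$ cannot remain unranked for $\omega_1$-many stages, so every point is ranked by some countable stage. Finally, note that the paper's proof quietly relies on the same meet observation (to see that its component $C$ is disjoint from $Y$, which condition (RF3) of \cref{def:rank function} requires when the new ray is assigned rank $\rho(y)+1$), so neither write-up is complete without it.
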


\begin{proof}
	Let $X$ be a tree order. By Zorn's Lemma, there is a maximal partial rank function $\rho$ whose domain is a downwards closed subset $Y \sse X$. Suppose for a contradiction that there is $x \in X \setminus Y$. Consider $\ds x \cap Y$. Note that $\ds x \cap Y$ is non-empty, since we can always give the root rank $0$. Furthermore, we can assign any limit of ranked points the rank which is the limit of the ranks of those points. Hence $\ds x \cap Y$ has a maximum element $y$. Let $C$ be the component above $y$ which contains $x$, and pick a maximal ray $R$ through $C$ which contains $x$. Then we can extend $\rho$ so that $x$ becomes ranked by giving every element of $R$ rank $\rho(y)+1$. \contradiction
\end{proof}

\begin{remark}
	\Cref{lem:every to rank function} shows that the construction elaborated above of iteratively adding spines is a completely general way of building branchwise-real trees. It was necessary to permit closed spines in addition to open spines in order to allow for trees with terminal nodes. All trees produced in this article, however, are without terminal nodes. Hence all constructions from now on will use open spines exclusively.
\end{remark}

Let us now establish some facts concerning rank functions on a branchwise-real tree $X$. Since every branch is order-isomorphic to a real interval, we can assume that the codomain of any rank function on $X$ is $\omega_1$.

\begin{definition}
	A rank function $\rho \colon X \to \omega_1$ is \emph{bounded} if the supremum of the set $\{\rho(x) \mid x \in X\}$ is less than $\omega_1$.
\end{definition}

\begin{lemma}\label{lem:countable height gradable}
	If $X$ has a bounded rank function, then it is continuously gradable.
\end{lemma}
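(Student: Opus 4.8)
The plan is to reduce to $\R$-gradability and then invoke \cref{thm:R grading to continuous grading}: it suffices to produce an order-preserving function $\ell \colon X \to \R$. So fix a bounded rank function $\rho \colon X \to \omega_1$ and let $\beta < \omega_1$ be the supremum of its range. Since $\beta$ is a countable ordinal, the ordinal $\beta+1$ embeds order-preservingly into the reals, so I can fix a strictly increasing sequence $(r_\alpha)_{\alpha \leq \beta+1}$ of real numbers. The half-open intervals $[r_\alpha, r_{\alpha+1})$, for $\alpha \leq \beta$, will serve as disjoint `slots', one reserved for each rank and stacked in $\R$ in the same order as the ranks themselves.

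Next I would define $\ell$ one rank-level at a time, according to the structure of $\rho^{-1}[\{\alpha\}]$. If $\alpha$ is $0$ or a limit, then by the antichain condition this level is an antichain, so I can afford to send every point of rank $\alpha$ to the single value $r_\alpha$; incomparable points may share a value without violating order-preservation. If $\alpha = \gamma+1$ is a successor, then by the ray-decomposition condition the level is a disjoint union of pairwise incomparable rays; each such ray is a final segment of a branch and hence order-isomorphic to a real interval, so it admits a strictly increasing embedding into the bounded interval $[r_{\gamma+1}, r_{\gamma+2})$, and I fix one such embedding per ray. Because distinct rays at the same level are pairwise incomparable, they may all be mapped into the same slot without any interference. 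This determines $\ell$ on all of $X$.

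Finally I would verify that $\ell$ is order-preserving. Suppose $x < y$; then $\rho(x) \leq \rho(y)$. If $\rho(x) < \rho(y)$, the two values land in different slots and $\ell(x) < r_{\rho(x)+1} \leq r_{\rho(y)} \leq \ell(y)$, using that $\rho(x)+1 \leq \rho(y)$ together with the monotonicity of $(r_\alpha)$. If instead $\rho(x) = \rho(y)$, this common rank cannot be $0$ or a limit, since those levels are antichains whereas $x$ and $y$ are comparable; so it is a successor, and comparability forces $x$ and $y$ into the same ray, on which $\ell$ was chosen strictly increasing, giving $\ell(x) < \ell(y)$. Hence $\ell$ is an $\R$-grading and \cref{thm:R grading to continuous grading} completes the proof. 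The one place where boundedness is indispensable is the choice of the real sequence $(r_\alpha)$: this needs the range of $\rho$ to be countable, and a rank function cofinal in $\omega_1$ genuinely could not be accommodated this way, since $\omega_1$ does not embed into $\R$. A secondary subtlety worth flagging is that $\rho$ is only weakly monotone on comparable elements, which is exactly why the equal-rank case must be dispatched via the antichain/ray structure rather than the strict inequality $\rho(x) < \rho(y)$.
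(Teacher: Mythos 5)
Your proof is correct and follows essentially the same route as the paper's: embed the countable range of the rank function into \R, send each $0$/limit level to a single value, map the pairwise incomparable rays of each successor level into the corresponding real slot, and finish by invoking \cref{thm:R grading to continuous grading}. The only cosmetic difference is that the paper justifies the existence of your increasing sequence $(r_\alpha)$ by citing \cref{lem:countable ordinal in Q} (Cantor's universality of \Q\ for countable linear orders), which is the fact your opening assertion implicitly relies on, and it allocates successor-rank rays to the open interval between consecutive values rather than to half-open slots; both variants check out.
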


To prove this lemma, we make use of the following set-theoretic result due to Cantor \cite{cantor-1895}, which in particular shows that every countable ordinal embeds into \Q.

\begin{lemma}\label{lem:countable ordinal in Q}
 	The set \Q\ of rationals is universal for countable linear orders: every countable linear order $X$ embeds into \Q.
\end{lemma}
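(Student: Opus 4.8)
The plan is to run Cantor's classical \enquote{back-and-forth} argument, though since we require only an embedding rather than a full isomorphism, only the \enquote{forth} half is needed. The single feature of $\Q$ that drives everything is that it is a dense linear order without endpoints: strictly between any two rationals lies a third, and there is neither a least nor a greatest rational. (Recall also that in the present terminology an order-preserving map out of a linear order is automatically injective, so to produce an embedding it suffices to produce an order-preserving function.)

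First I would dispose of the finite case, where an order-preserving injection into a finite subset of $\Q$ is built trivially by induction on $\abs{X}$. So assume $X$ is countably infinite and fix some enumeration $X = \{x_0, x_1, x_2, \ldots\}$, \emph{not} required to respect the order of $X$. I then define $f \colon X \to \Q$ by recursion along this enumeration, maintaining the invariant that after stage $n$ the restriction of $f$ to $\{x_0, \ldots, x_{n-1}\}$ is order-preserving.

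At stage $n$, having defined $f$ on $\{x_0, \ldots, x_{n-1}\}$, I consider the two finite sets $L \defeq \{f(x_i) \mid i < n,\ x_i < x_n\}$ and $U \defeq \{f(x_i) \mid i < n,\ x_i > x_n\}$. The inductive invariant guarantees that every element of $L$ lies below every element of $U$. I then set $f(x_n)$ to be a rational lying strictly between $L$ and $U$, which exists in each case: if both sets are non-empty, density of $\Q$ supplies a rational strictly between $\max L$ and $\min U$; if $L = \es$ (respectively $U = \es$) but the other is non-empty, the absence of a least (respectively greatest) rational supplies one below $\min U$ (respectively above $\max L$); and if both are empty, which happens only when $n = 0$, I take any rational at all. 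By construction $f(x_n)$ is correctly placed against every earlier value, so the invariant is preserved.

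Taking the union of these finite stages yields a function $f \colon X \to \Q$. Any two points $x_i, x_j$ of $X$ both appear by some finite stage, at which point their images were placed in the correct order, so $f$ is order-preserving on all of $X$ and hence the desired embedding. I do not expect a genuine obstacle here: the only delicate point is the small case analysis in the intermediate-value step, which is precisely where the no-endpoints hypothesis on $\Q$ is invoked, the rest being routine bookkeeping.
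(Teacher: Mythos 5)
Your proof is correct and follows exactly the paper's own argument: the ``forth'' half of Cantor's back-and-forth method, enumerating $X$ and placing each $f(x_n)$ relative to the finitely many earlier images using density and the absence of endpoints in $\Q$. The only difference is that you spell out the bookkeeping (the sets $L$ and $U$ and the case analysis) that the paper leaves implicit.
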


\begin{proof}
	A slick proof makes use of the `forth' part of the classical `back-and-forth method'. Enumerate $X = \{x_n \mid n \in \omega\}$. By induction on this enumeration we then build up an embedding $r \colon X \to \Q$. Once we have $r \restr \{x_0, \ldots, x_{n-1}\}$, since \Q\ is a dense linear order without endpoints, we can find $r(x_{n})$ whose relative position with respect to $r(x_0), \ldots, r(x_{n-1})$ is the same that of $x_n$ with respect to $x_0, \ldots, x_{n-1}$.
\end{proof}

\begin{proof}[Proof of \cref{lem:countable height gradable}]
	Let $\rho \colon X \to \omega_1$ be a rank function with:
	\begin{equation*}
		\gamma \defeq \{\rho(x) \mid x \in X\} < \omega_1
	\end{equation*}
	Let $r \colon \gamma \to \Q$ be an embedding furnished by \cref{lem:countable ordinal in Q}. We construct an embedding $\ell \colon X \to \R$ by induction on the rank. If $\alpha$ is $0$ or a limit, map each element in $\rho^{-1}[\{\alpha\}]$ to $r(\alpha)$. Take a successor $\beta + 1$. Then $\rho^{-1}[\{\beta+1\}]$ is the disjoint union of a family of incomparable rays, each of which is order-isomorphic to a real interval. Take for each an isomorphism onto a sub-interval of $(r(\beta),r(\beta+1))$, and use this to extend $\ell$ there. After $\gamma$-many steps, we obtain an \R-grading $X \to \R$. Then by \cref{thm:R grading to continuous grading}, we find that $X$ is continuously gradable.
\end{proof}

\begin{remark}\label{rem:cont grad not implies bounded rank}
	The converse of \cref{lem:countable height gradable} does not hold. A good counterexample is $U_\kappa$, the universal continuously gradable branchwise real tree of degree $\kappa \geq 2$, defined as follows. This is essentially the construction of the universal \R-tree given in \cite{dyubina01}; see also \cite{nikiel1989topologies,mayer92}. Let $U_\kappa$ be the set of functions $r \colon [0,a) \to \kappa$, for each nonnegative real $a$, which are `piecewise constant to the right': for any $t \in [0,a)$ there is $\ep > 0$ such that $r \restr [t, t+\ep)$ is constant. Note that this condition ensures that the set of `value-change points' — those $t \in [0,a)$ for which for every $\ep > 0$ there is $s \in (t-\ep,t)$ such that $r(s) \neq r(t)$ — is well-ordered. 
	
	Then $U_\kappa$ becomes a branchwise-real tree under function extension. We can define a continuous grading $\ell$ on $U_\kappa$ by setting $\ell(r) = a$ when $r \colon [0,a) \to \kappa$. With this definition, any continuously graded branchwise-real tree of degree at most $\kappa$ embeds into $U_\kappa$ in such a way that the continuous gradings agree. 

	Let us see briefly why $U_\kappa$ has no bounded rank function. Suppose $\rho \colon X \to \gamma$ is a (surjective) rank function with $\gamma < \omega_1$. Assume that $\gamma$ is a limit ordinal, the successor case being similar. By \cref{lem:countable ordinal in Q} we can find an order embedding $s \colon \gamma + 1 \to [0, \infty)$ such that $s(0) = 0$. We can recursively define $r \colon [0, s(\gamma)) \to \kappa$ such that $\rho(r \restr [0, s(\alpha))) = \alpha$ for each $\alpha < \gamma$ as follows. To define $r$ on $[s(\alpha), s(\alpha+1))$, choose a ray above $r \restr [0, s(\alpha))$ of rank $\alpha + 1$. This ray is essentially a function $h_\alpha \colon [s(\alpha), \infty) \to \kappa$. Define:
	\begin{equation*}
		r \restr [s(\alpha), s(\alpha+1)) \defeq h_\alpha \restr [s(\alpha), s(\alpha+1))
	\end{equation*}
	At limits we take unions. But now $r$ cannot have a rank (it would get rank $\gamma$), which is a contradiction. \contradiction
\end{remark}

In \cref{sec:strongly uniform} below, we shall be considering the interaction between rank functions, connected components and order-automorphisms. The following basic facts will be useful.

\begin{definition}
	Let $\ab{X,\rho}$ be a ranked tree order. Take $x \in X$ and $C$ a component above $x$. The \emph{rank} of $C$ is $\rho(C) \defeq \min\{\rho(y) \mid y \in C\}$.
\end{definition}

Note that the rank of a connected component is always a successor.

\begin{lemma}\label{lem:component facts}
	Let $\ab{X,\rho}$ be a ranked tree order and take $x \in X$ non-terminal.
	\begin{enumerate}[label=(\arabic*)]
		\item\label{item:limit degrees; lem:component facts}
			When $\rho(x)$ is $0$ or a limit, every component above $x$ has rank $\rho(x)+1$.
		\item\label{item:successor degrees; lem:component facts}
			When $\rho(x)$ is a successor, there is one component above $x$ of rank $\rho(x)$ and the rest are of rank $\rho(x)+1$.
	\end{enumerate}
	Let $f \colon X \to X$ be an order-automorphism.
	\begin{enumerate}[resume*]
		\item\label{item:auto factors; lem:component facts}
			Then $f$ factors through a bijection of the components above $x$ onto the components above $f(x)$.
		\item\label{item:cointial interval; lem:component facts}
			If $f$ maps the component $C$ above $x$ onto the component $f(C)$ above $f(x)$, then there is a coinitial interval $I \sse C$ of constant rank $\rho(C)$ which maps onto a coinitial interval $f(I) \sse f(C)$ of constant rank $\rho(f(C))$.
	\end{enumerate}
\end{lemma}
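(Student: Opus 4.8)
The plan is to handle the four parts in turn, resting throughout on two standing observations about $\rho$. First, $\rho$ is only \emph{weakly} monotone: a chain cannot strictly increase in rank at each step, since (RF3) places an entire ray at a single successor rank, so $x<y$ yields merely $\rho(x)\le\rho(y)$. Second, I would record a consequence of (RF1): for any $w$, every ordinal $\alpha\le\rho(w)$ is realised on $\ds w$. Indeed, extending $\ds w$ to a branch $B$, the ranks on $B$ are downward closed and contain $\rho(w)$, so $\alpha$ is realised at some $z\in B$; monotonicity forces $z\le w$ when $\alpha<\rho(w)$. I would also note that two comparable points of the same successor rank lie in a common ray of (RF3), as distinct such rays are incomparable.

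For (1), with $\rho(x)$ equal to $0$ or a limit, (RF2) makes $\rho^{-1}[\{\rho(x)\}]$ an antichain, so every $y>x$ has $\rho(y)>\rho(x)$, whence every component above $x$ has rank $\ge\rho(x)+1$. Conversely, given a component $C$ and $y\in C$, the realisation observation yields $z\le y$ with $\rho(z)=\rho(x)+1$; since $\rho(z)>\rho(x)$ forces $z>x$ and hence $z\in(x,y]\subseteq C$, the rank of $C$ is exactly $\rho(x)+1$. For (2), write $\rho(x)=\beta+1$ and let $R$ be the ray of (RF3) through $x$. Non-terminality stops $x$ from being the maximum of its branch, hence of $R$, so $R$ continues strictly above $x$; those points form a single component $C_0$ of rank $\beta+1=\rho(x)$. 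Any component of rank $\rho(x)$ contains some $y>x$ with $\rho(y)=\beta+1$, which by the ray fact lies in $R$ and so in $C_0$, giving uniqueness; for any other component the minimal-rank point cannot have rank $\beta+1$, so its rank is $\ge\beta+2$, upgraded to equality exactly as in (1).

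Part (3) is immediate: an order-automorphism preserves $<$ and, being an isomorphism of the meet-semilattice, preserves $\wedge$, so since $y\sim_x z\Leftrightarrow y\wedge z>x$ it carries $\sim_x$-classes to $\sim_{f(x)}$-classes, and $C\mapsto f[C]$ is the required bijection. For (4) I would first isolate the canonical interval $I\defeq\{y\in C\mid\rho(y)=\rho(C)\}$ and show it is a coinitial interval of constant rank $\rho(C)$. It is nonempty (the minimum rank is attained), and it is a chain: if $y,y'\in I$ were incomparable then $m=y\wedge y'\in C$ would have $\rho(m)\le\rho(C)$, which is impossible if $\rho(m)<\rho(C)$ (minimality) and, if $\rho(m)=\rho(C)$, forces $y,y'$ into a common ray and hence comparability. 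Being a chain of constant rank it is convex, and coinitiality follows from the realisation observation (with a short separate check when $\rho(C)=\rho(x)$, obtained by meeting an arbitrary $w\in C$ with a point of $R$ above $x$). Let $I'$ be the analogue for $f(C)$.

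The real work, and the main obstacle, is that $f$ need not preserve rank, so $f[I]$ will generally not equal $I'$. I resolve this by intersecting: $f^{-1}[I']$ is again a coinitial interval of $C$, and I set $I^\ast\defeq I\cap f^{-1}[I']$, which is convex and of constant rank $\rho(C)$. The key point is that $I^\ast$ remains coinitial. Given $w\in C$, choose $u\in f^{-1}[I']$ with $u\le w$, then $v\in I$ with $v\le u$, then $u'\in f^{-1}[I']$ with $u'\le v$; all of these lie in the chain $\ds w$, so $u'\le v\le u$, and convexity of $f^{-1}[I']$ places $v\in I^\ast$ with $v\le w$. Finally $f[I^\ast]=f[I]\cap I'$ is a coinitial interval of $f(C)$ contained in $I'$, hence of constant rank $\rho(f(C))$, so $I^\ast$ and its image are as required.
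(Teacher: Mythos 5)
Your proof is correct, and parts (1)--(3) run along essentially the same lines as the paper's: weak monotonicity of $\rho$ (your reading of ``order-preserving'' is indeed the intended one), realisation of every smaller ordinal as a rank below a given point (from (RF1)), and the fact that comparable points of equal successor rank share a ray of (RF3). Part (4) is where you genuinely diverge. The paper produces a \emph{single} point lying on both constant-rank rays: with $R$ and $R'$ the rays of constant rank $\rho(C)$, $\rho(f(C))$ through $C$ and $f(C)$, it takes $y' \in R'$, $z' \in f(R)$ and a common lower bound $w' > f(x)$; since $w' \in f(C)$ and $w' \le y'$ force $\rho(w') = \rho(f(C))$, the point $w'$ lies on $R'$, and symmetrically $f^{-1}(w')$ lies on $R$; the required intervals are then simply $(x, f^{-1}(w')]$ and $(f(x), w']$, whose constant rank and coinitiality come almost for free. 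You instead build the canonical maximal sets $I = \{y \in C \mid \rho(y) = \rho(C)\}$ and $I'$, prove each is a coinitial convex chain, and pass to $I \cap f^{-1}[I']$, recovering coinitiality of the intersection via your three-step argument $u' \le v \le u$ plus convexity. Both are valid: the paper's trick is shorter because everything below one common point is automatically of constant rank on both sides, whereas your route isolates the reusable facts that the minimal-rank part of a component is a coinitial interval and that two coinitial convex chains in $C$ have coinitial intersection.

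One hypothesis-level slip to repair: the lemma is stated for an arbitrary ranked tree order, which need not be a meet-semilattice --- the paper invokes $y \sim_x z \Leftrightarrow y \wedge z > x$ only \emph{when} $X$ is a meet-semilattice, and, for instance, the tree order consisting of $[0,1)$ with two incomparable points on top admits a rank function but has no meet for those two points. Your part (3) appeals to preservation of $\wedge$, and your chain and coinitiality arguments in (4) take meets such as $y \wedge y'$. Each such use is harmlessly replaced by a witness from the definition of $\sim_x$: given $y \sim_x y'$, take $w > x$ with $w \le y, y'$; then $w \in C$ gives $\rho(w) \ge \rho(C)$ while $w \le y$ gives $\rho(w) \le \rho(C)$, and your argument continues word for word. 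This is exactly the device the paper's proof uses, and with that substitution your proof goes through in the stated generality.
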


\begin{proof}
	\begin{enumerate}[label=(\arabic*)]
		\item Let $C$ be any component above $x$, and pick $y \in C$ of minimal rank. Then the restriction of $\rho$ to $\ds y$ is surjective onto $\rho(y)+1$, from which we conclude that $\rho(C) = \rho(y) = \rho(x)+1$.

		\item Since $\rho(x)$ is a successor, $x$ lies on a ray all of whose elements have rank $\rho(x)$. Then as $\rho$ is order-preserving, the component above $x$ through which this ray passes must have rank $\rho(x)$. Let $C$ be any other component above $x$. Then $C$ cannot have rank $\rho(x)$, since $\rho^{-1}[\{\rho(x)\}]$ consists of the disjoint union of a family of incomparable rays. By the above argument, we have that $\rho(C) = \rho(x)+1$.

		\item Let $C$ be any component above $x$. If $y',z' \in f(C)$, then there is $w > x$ such that $w \leq f^{-1}(y'), f^{-1}(z')$. Hence we have $f(x) < f(w) \leq y',z'$, so that $y'$ and $z'$ lie in the same component above $f(x)$. By the same argument applied to $f^{-1}$, we see that $f$ maps components above $x$ onto components above $f(x)$, and vice versa.

		\item There is a ray $R$ through $C$ all of whose elements are of rank $\rho(C)$. Similarly, there is a ray $R'$ through $f(C)$ all of whose elements are of rank $\rho(f(C))$. Take $y' \in R'$ and $z' \in f(R)$. Then there is $w' > f(x)$ such that $w' \leq y', z'$. This $w'$ then lies on both rays. Since $f$ is an automorphism, this means that the coinitial interval $(x,f^{-1}(w')]$ is mapped onto the coinitial interval $(f(x),w']$.\qedhere
	\end{enumerate}
\end{proof}

\section{A weakly uniformly branching, rigid branch\-wise-real tree}
\label{sec:weakly uniform}

In this section, I construct our first rigid branchwise-real tree. It satisfies the following uniformity condition, the stronger version of which will be considered in the next section.

\begin{definition}
	A tree order is \emph{weakly uniformly $\kappa$-branching} if every branching node has degree $\kappa$.
\end{definition}

\begin{theorem}\label{thm:weakly-uniform rigid}
	Let $2 \leq \kappa \leq \cont$. There is a weakly uniformly $\kappa$-branching, rigid branchwise-real tree which is continuously gradable.
\end{theorem}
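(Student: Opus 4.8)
The plan is to realise $X$ through the staged construction of \cref{sec:growing brtos}, run for exactly $\omega$ steps (taking the union at the end and adding nothing new at stage $\omega$), using a precomputed family of colourings of $(0,\infty)$ to drive the branching. Start with the root; at stage $1$ attach $\kappa$ open spines above it; and at stage $n+1$, for every point $x$ introduced at stage $n$, attach $\kappa-1$ fresh open spines above $x$ exactly when the colour assigned to $x$ is a distinguished ``branch'' colour. Each new spine receives a previously unused colouring from the family, laid along it, which both determines its future branching and builds up a global colouring $c\colon X\to\text{colours}$. Since each branch point then has the continuation of its spine together with $\kappa-1$ new spines above it, every branching node has degree exactly $1+(\kappa-1)=\kappa$ (and $\kappa$ at the root), giving weak uniform $\kappa$-branching, while points of non-branch colour have degree $1$. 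Running for $\omega$ steps yields a rank function $\rho\colon X\to\omega$ with $\sup_x\rho(x)=\omega<\omega_1$, so $\rho$ is bounded and \cref{lem:countable height gradable} gives continuous gradability. One checks directly that every branch is a countable concatenation of half-open real intervals, hence order-isomorphic to a real interval, so $X$ is genuinely a branchwise-real tree.

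The technical heart is the construction, in \ZFC, of the driving family of colourings. I would fix a colour set and build a family $\{c_\xi : \xi<\cont\}$ of colourings of $(0,\infty)$ subject to two requirements: \emph{density}, the branch colour occurs densely in each $c_\xi$, so that branch points are dense along every spine and hence along every branch of $X$; and \emph{genericity}, for any $\xi,\eta$ and any order-isomorphism $h$ between subintervals of $(0,\infty)$ with $c_\xi=c_\eta\circ h$ on the relevant domain, necessarily $\xi=\eta$ and $h=\id$. In particular the $c_\xi$ are pairwise distinct and individually rigid. Since $(0,\infty)$ carries only $\cont$-many increasing maps, I would obtain such a family by a transfinite recursion of length $\cont$, at stage $\xi$ choosing $c_\xi$ so as to defeat every potential non-trivial match against the colourings already chosen while preserving density. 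This diagonalisation against continuum-many maps is what ``sufficiently generic'' means, and it is why the degree is capped at $\kappa\le\cont$: the whole construction uses at most $\cont$ spines and hence at most $\cont$ colourings.

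For rigidity, let $f$ be an order-automorphism; I aim to show $f=\id$ by induction on the rank. Since the root is the unique minimum, $f$ fixes it, and by \cref{lem:component facts} $f$ permutes the components above the root, that is, the stage-$1$ spines. The crucial point is that $f$ must carry each spine $S$ onto a spine $S'$ by an order-isomorphism of $(0,\infty)$ respecting the colouring along it, so that $c_S=c_{S'}\circ(f\restr S)$; the genericity clause then forces $S=S'$ and $f\restr S=\id$. Feeding this into the next rank — the stage-$2$ spines sit above now-fixed stage-$1$ points, where $f$ fixes each base point, fixes the continuation component pointwise, and permutes the $\kappa-1$ new spines, to which genericity again applies — and iterating up all finite ranks shows $f$ fixes every spine pointwise, hence $f=\id$. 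Because branch points are dense and branches are real intervals, pinning every branch point already forces $f=\id$, so the spinewise argument is exactly what is needed.

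The main obstacle is making the genericity clause genuinely applicable to $f$: a priori an automorphism need not preserve either the colouring $c$ or the decomposition of the tree into spines, so before invoking genericity I must show that these are order-theoretic invariants. Concretely I must prove that $f$ preserves the rank function $\rho$ and maps the ``continuation'' component — the unique same-rank component above a successor-rank branching point identified by \cref{lem:component facts}\ref{item:successor degrees; lem:component facts} — to a continuation component, so that whole spines map to whole spines; and I must ensure the global colouring $c$ is recoverable from the order alone, so that the equation $c_S=c_{S'}\circ(f\restr S)$ is legitimate. Establishing this recoverability — coordinating the combinatorial branch/non-branch data and the finer colour data with the automorphism-invariant notions of rank and component supplied by \cref{lem:component facts} — is the delicate step on which the whole rigidity argument, and hence the theorem, turns; the density and genericity of the colouring family are engineered precisely to make it go through.
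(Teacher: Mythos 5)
Your overall skeleton (staged growth for $\omega$ steps, colourings laid along spines with branch points at a distinguished colour, bounded rank function plus \cref{lem:countable height gradable} for continuous gradability) matches the paper's, and your diagonally-constructed family of colourings is a plausible \ZFC\ construction, closer in spirit to the paper's \cref{lem:generic family colourings} than to what the paper actually uses here. But there is a genuine gap at the heart of the rigidity argument. You assert that $f$ ``must carry each spine $S$ onto a spine $S'$'', correctly flag this as the delicate point --- and then never prove it; worse, your proposed route to it (first show that $f$ preserves the rank function $\rho$ and maps continuation components to continuation components, \emph{before} invoking genericity) cannot work as stated. Rank is not an order-theoretic invariant: the same tree can admit rank functions with quite different ranges (\cref{fig:alternative rank function}), and in the homogeneous tree $M_\kappa$ of \cref{prop:M kappa props} the ``twist'' automorphisms genuinely change ranks. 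Rank-preservation is therefore a \emph{consequence} of rigidity in these constructions, not a stepping stone towards it. Any correct proof must extract a contradiction from the colouring data along the image ray $f(U)$ without assuming $f(U)$ is a spine: a priori $f(U)$ can (i) be a single spine, (ii) consist of initial segments of spines from successively later stages glued at branching points, or (iii) start part-way up an already-existing spine, and your argument as written addresses only case (i).

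This is exactly the issue the paper's proof is engineered to close: its ``colourings'' are the explicit sets $S_A$, whose solid real intervals accumulate precisely at the base of each spine, making the three patterns (i)--(iii) pairwise non-isomorphic as subsets of $(0,\infty)$ and forcing case (i), whence an impossible isomorphism $S_A \cong S_B$. Your ingredients could close the gap too, but by a different argument than the one you sketch: since your genericity clause quantifies over order-isomorphisms $h$ between arbitrary \emph{subintervals}, apply it to the first piece of $f(U)$, i.e.\ the initial segment of $f(U)$ lying along the unique spine $V$ containing the points just above $f(z)$. Because branching is detected by degree, your two-valued colouring is automatically order-recoverable, so on that piece you get $c_U = c_V \circ h$ for an isomorphism $h$ between subintervals; genericity forces $V = U$ and $h = \id$, contradicting $f(z) \neq z$ in cases (i) and (ii), and in case (iii) contradicting that $h$ maps an interval of the form $(0,t)$ onto one of the form $(r,s)$ with $r > 0$. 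With that argument spelled out --- plus minor bookkeeping (reserve batches of unused colourings per stage so the family of size $\cont$ is never exhausted, and check the diagonalisation really delivers the subinterval clause, which it does since there are only $\cont$-many such $h$) --- your proof would go through. As submitted, however, the crucial step is named but missing.
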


\begin{proof}
	We first construct a collection of dense, mutually non-isomorphic sets of positive reals $\{S_A \sse (0,\infty) \mid A \sse \omega\}$, such that $S_A$ `encodes' $A$.\footnote{In fact, we only need that the $S_A$'s are mutually non-isomorphic as subsets of $(0, \infty)$, but the construction yields sets which are mutually non-isomorphic as stand-alone linear orders.} The set $S_A$ consists of the rationals in $(0,\infty)$ plus a descending sequence of real intervals with limit $0$ whose endpoints are irrational, such that the $n$th interval is open when $n \notin A$ and half-open when $n \in A$. Formally, fix some descending sequence $(x_n)$ of irrationals with limit $0$. Then for any $A \sse \omega$ we define:
	\begin{equation*}
		S_A \defeq (\Q \cap (0,\infty)) \cup \bigcup_{n \in \omega}(x_{2n+1},x_{2n}) \cup \{x_{2n} \mid n \in A\}
	\end{equation*}
	See \cref{fig:example S_A; proof:weakly-uniform rigid} for a representation of an example $S_A$.

	\begin{figure}[ht]
		\centering
		\begin{tikzpicture}[xscale=1.3]
			\def\subset{{1,0,0,1,0,0,1,1,1}}
			\pgfmathsetmacro{\subsetmax}{8}
			\draw[rounded corners=10] 
				(8,0.5) -- (0,0.5) -- (0,-0.5) -- (8,-0.5);
			\begin{scope}
				\clip [rounded corners=10] (8,0.5) -- (0,0.5) -- (0,-0.5) -- (8,-0.5);
				\fill[pattern=dots] (1,0.5) rectangle (8,-0.5);
				\foreach \n in {0,...,\subsetmax}
				{
					\pgfmathsetmacro{\xmin}{{9*(1-atan((2*\n+1)/7)/90)-1}}
					\pgfmathsetmacro{\xmax}{{9*(1-atan((2*\n+2)/7)/90)-1}}
					\pgfmathsetmacro{\xmed}{{(\xmin+\xmax)/2}}
					\pgfmathsetmacro\subsetvalue{\subset[\n]}
					\ifthenelse{\equal{\subsetvalue}{1}}
					{
						\path[fill=white]
							(\xmin,0.5) 
								{[rounded corners=5] -- (\xmax,0.5)
								-- (\xmax,-0.5)}
								-- (\xmin,-0.5)
								-- cycle;
						\draw[fill=black]
							(\xmin,0.5) 
								{[rounded corners=5] -- (\xmax,0.5)
								-- (\xmax,-0.5)}
								-- (\xmin,-0.5)
								-- cycle;
					}
					{
						\path[fill=white, rounded corners=5]
							(\xmin,0.5) 
								-- (\xmax,0.5)
								-- (\xmax,-0.5)
								-- (\xmin,-0.5)
								-- cycle;
						\draw[fill=black, rounded corners=5]
							(\xmin,0.5) 
								-- (\xmax,0.5)
								-- (\xmax,-0.5)
								-- (\xmin,-0.5)
								-- cycle;
					}
					\coordinate (i\n) at (\xmin,-0.5);
				}
				\node at (0.6,0) {$\cdots$};
			\end{scope}
			\begin{scope}[yshift=-70, xshift=10]
				\node at (0.5,0) {$A=$};
				\node at (1,0) {$\bigg\{$};
				\node at (1.75,-0.1) {$\ldots,$};
				\node (a8) at (2.5,0) {$8,$};
				\node (a7) at (3.25,0) {$7,$};
				\node (a6) at (4.0,0) {$6,$};
				\node (a3) at (4.75,0) {$3,$};
				\node (a0) at (5.5,0) {$0$};
				\node at (6,0) {$\bigg\}$};
				\foreach \n in {0,3,6,7,8}
				{
					\draw[-latex] (a\n) to[out=90, in=270] ($(i\n)-(0,0.02)$);
				}
			\end{scope}
		\end{tikzpicture}
		\caption{A representation of an example $S_A$ set. Solid regions contain every real, while dotted regions only contain the rationals. A curved boundary on a solid region signifies that it does not include that endpoint. The set $A \sse \omega$ is written in descending order, so as to coincide with the ordering of the real intervals. Arrows relate the elements of $A$ with the corresponding endpoints of real intervals which code for them in $S_A$.}
		\label{fig:example S_A; proof:weakly-uniform rigid}
	\end{figure}
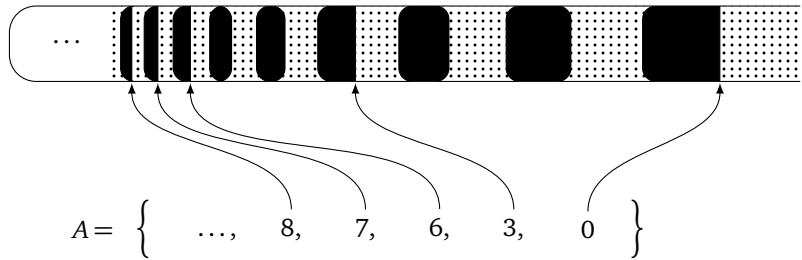
	
	Suppose that $p \colon S_A \to S_B$ is an order-isomorphism for $A \neq B$. Then $p$ must map the real intervals onto the real intervals. Moreover, the rightmost real interval in $S_A$ must map to the rightmost real interval in $S_B$, and so on. But $A$ and $B$ disagree on some natural number, say $n \in A \setminus B$ (without loss of generality). Then the $n$th real interval from the right in $S_A$ is half-open, and $p$ sends it to the $n$th real interval in $S_B$, which is open. Since we ensured that the endpoints of the real intervals were irrational, this means that $p$ doesn't preserve the fact that the $n$th real interval in $S_A$ has a supremum. \contradiction

	Our rigid branchwise-real tree $X$ is now constructed in $\omega$-many stages using open spines, laying along each new spine a different set $S_A$, and using this to determine which points to use as the bases of new spines. Note that we need to make sure that we have fresh $S_A$ sets available at each stage of the construction: we don't want to run out. For this, we can partition $\P(\omega)$ into $\omega$-many batches of size $\cont$, so that the $n$th batch is reserved for the $n$th stage.

	We construct $X$ in stages $X_n$ for $n \in \omega$. Start with $X_0$ the singleton tree, and form $X_1$ by adding $\kappa$-many new open spines above the root. Now assume that $X_n$ is constructed for $n \geq 1$. To each spine $U$ added at stage $n$, associate a different, unused subset $A \sse \omega$. Fix an isomorphism $p_U \colon U \to (0,\infty)$ and above each element in $p_U^{-1}(S_A)$ add $(\kappa-1)$-many new open spines. Finally, let $X \defeq \bigcup_{n \in \omega}X_n$. Define a rank function $\rho \colon X \to \omega$, so that for each $n \in \omega$ we have $\rho^{-1}[\{n+1\}] = X_{n+1} \setminus X_n$.

	Note that at each stage of the construction, we made sure that any branching points got degree $\kappa$. Furthermore, $\rho$ is a bounded rank function on $X$, hence by \cref{lem:countable height gradable} we have that $X$ is continuously gradable.

	Let us see that $X$ is rigid. Suppose for a contradiction that $f \colon X \to X$ is a non-trivial order-automorphism. Then there is $x \in X$ such that $f(x) \neq x$. There must then be $y > x$ such that $f([x,y])$ and $[x,y]$ are disjoint. Since each $S_A$ is dense in $(0,\infty)$, there must be $z \in [x,y]$ which is branching. Choose any spine $U$ above $z$ added during the construction. Then $f(U)$ is a ray through $\us{f(z)}$ and $f \restr U \colon U \to f(U)$ is an isomorphism such that $w \in U$ is branching if and only if $f(w) \in f(U)$ is branching. 

	Let us now consider the pattern of branching nodes along $U$ and $f(U)$. Following $U$, this looks like some $S_A$: it consists in an alternating sequence of real intervals and rational intervals, which converge at the base. Hence we must have the same pattern along $f(U)$. Now, there three possibilities for the way in which $f(U)$ fits into our construction.
	\begin{enumerate*}[label=(\roman*)]
		\item\label{item:single; possibilities; proof:weakly-uniform rigid}
			$f(U)$ is a single spine added above $f(z)$.
		\item\label{item:mixed; possibilities; proof:weakly-uniform rigid}
			$f(U)$ consists of an initial segment of a spine added above $f(z)$, followed by an initial segment of a spine added at the next stage (followed potentially by further initial segments of spines added at later stages).
		\item\label{item:part-way; possibilities; proof:weakly-uniform rigid}
			$f(U)$ starts part-way up a spine added at an earlier stage.
	\end{enumerate*}
	See \cref{fig:F(U) patterns; proof:weakly-uniform rigid} for a representation of the three types of patterns of branching points which can occur along $f(U)$. Considered as subsets of $(0, \infty)$, no two of three possible patterns of branching nodes can be order-isomorphic. Since the pattern of branching nodes along $U$ looks like \ref{item:single; possibilities; proof:weakly-uniform rigid}, the pattern along $f(U)$ must also have this form.

	\begin{figure}[ht]
		\centering
		\begin{tikzpicture}[xscale=1.3, yscale=0.95]
			\begin{scope}
				\node at (-0.5,0) {\ref{item:single; possibilities; proof:weakly-uniform rigid}};
				\pgfmathsetmacro{\intervalmax}{25}
				\draw[rounded corners=10] 
					(8,0.5) -- (0,0.5) -- (0,-0.5) -- (8,-0.5);
				\begin{scope}
					\clip [rounded corners=10] (8,0.5) -- (0,0.5) -- (0,-0.5) -- (8,-0.5);
					\fill[pattern=dots] (0,0.5) rectangle (8,-0.5);
					\foreach \n in {0,...,\intervalmax}
					{
						\pgfmathsetmacro{\xmin}{{9*(1-atan((2*\n+1)/7)/90)-0.8}}
						\pgfmathsetmacro{\xmax}{{9*(1-atan((2*\n+2)/7)/90)-0.8}}
						\pgfmathsetmacro{\rounding}{{10*(1-atan((2*\n+2)/7)/90)}}
						\path[fill=white]
							(\xmin,0.5) 
								{[rounded corners=\rounding] -- (\xmax,0.5)
								-- (\xmax,-0.5)}
								-- (\xmin,-0.5)
								-- cycle;
						\draw[fill=black]
							(\xmin,0.5) 
								{[rounded corners=\rounding] -- (\xmax,0.5)
								-- (\xmax,-0.5)}
								-- (\xmin,-0.5)
								-- cycle;
					}
				\end{scope}
			\end{scope}
			\begin{scope}[yshift=-50]
				\node at (-0.5,0) {\ref{item:mixed; possibilities; proof:weakly-uniform rigid}};
				\pgfmathsetmacro{\intervalamax}{25}
				\pgfmathsetmacro{\intervalamin}{4}
				\pgfmathsetmacro{\intervalbmax}{25}
				\pgfmathsetmacro{\intervalbmin}{2}
				\draw[rounded corners=10] 
					(8,0.5) -- (0,0.5) -- (0,-0.5) -- (8,-0.5);
				\begin{scope}
					\clip [rounded corners=10] (8,0.5) -- (0,0.5) -- (0,-0.5) -- (8,-0.5);
					\fill[pattern=dots] (0,0.5) rectangle (8,-0.5);
					\foreach \n in {\intervalamin,...,\intervalamax}
					{
						\pgfmathsetmacro{\xmin}{{9*(1-atan((2*\n+1)/7)/90)-0.8}}
						\pgfmathsetmacro{\xmax}{{9*(1-atan((2*\n+2)/7)/90)-0.8}}
						\pgfmathsetmacro{\rounding}{{10*(1-atan((2*\n+2)/7)/90)}}
						\path[fill=white]
							(\xmin,0.5) 
								{[rounded corners=\rounding] -- (\xmax,0.5)
								-- (\xmax,-0.5)}
								-- (\xmin,-0.5)
								-- cycle;
						\draw[fill=black]
							(\xmin,0.5) 
								{[rounded corners=\rounding] -- (\xmax,0.5)
								-- (\xmax,-0.5)}
								-- (\xmin,-0.5)
								-- cycle;
					}
					\foreach \n in {\intervalbmin,...,\intervalbmax}
					{
						\pgfmathsetmacro{\xmin}{{9*(1-atan((2*\n+1)/7)/90)-1+3.4}}
						\pgfmathsetmacro{\xmax}{{9*(1-atan((2*\n+2)/7)/90)-1+3.4}}
						\pgfmathsetmacro{\rounding}{{10*(1-atan((2*\n+2)/7)/90)}}
						\path[fill=white]
							(\xmin,0.5) 
								{[rounded corners=\rounding] -- (\xmax,0.5)
								-- (\xmax,-0.5)}
								-- (\xmin,-0.5)
								-- cycle;
						\draw[fill=black]
							(\xmin,0.5) 
								{[rounded corners=\rounding] -- (\xmax,0.5)
								-- (\xmax,-0.5)}
								-- (\xmin,-0.5)
								-- cycle;
					}
				\end{scope}
			\end{scope}
			\begin{scope}[yshift=-100]
				\node at (-0.5,0) {\ref{item:part-way; possibilities; proof:weakly-uniform rigid}};
				\pgfmathsetmacro{\intervalmax}{6}
				\draw[rounded corners=10] 
					(8,0.5) -- (0,0.5) -- (0,-0.5) -- (8,-0.5);
				\begin{scope}
					\clip [rounded corners=10] (8,0.5) -- (0,0.5) -- (0,-0.5) -- (8,-0.5);
					\fill[pattern=dots] (0,0.5) rectangle (8,-0.5);
					\foreach \n in {0,...,\intervalmax}
					{
						\pgfmathsetmacro{\xmin}{{9*(1-atan((2*\n+1)/7)/90)-1-2.1}}
						\pgfmathsetmacro{\xmax}{{9*(1-atan((2*\n+2)/7)/90)-1-2.1}}
						\pgfmathsetmacro{\rounding}{{10*(1-atan((2*\n+2)/7)/90)}}
						\path[fill=white]
							(\xmin,0.5) 
								{[rounded corners=\rounding] -- (\xmax,0.5)
								-- (\xmax,-0.5)}
								-- (\xmin,-0.5)
								-- cycle;
						\draw[fill=black]
							(\xmin,0.5) 
								{[rounded corners=\rounding] -- (\xmax,0.5)
								-- (\xmax,-0.5)}
								-- (\xmin,-0.5)
								-- cycle;
					}
				\end{scope}
			\end{scope}
		\end{tikzpicture}
		\caption{A comparison three possible types of pattern which can occur along $f(U)$. Solid regions contain every real, while dotted regions only contain the rationals.}
		\label{fig:F(U) patterns; proof:weakly-uniform rigid}
	\end{figure}
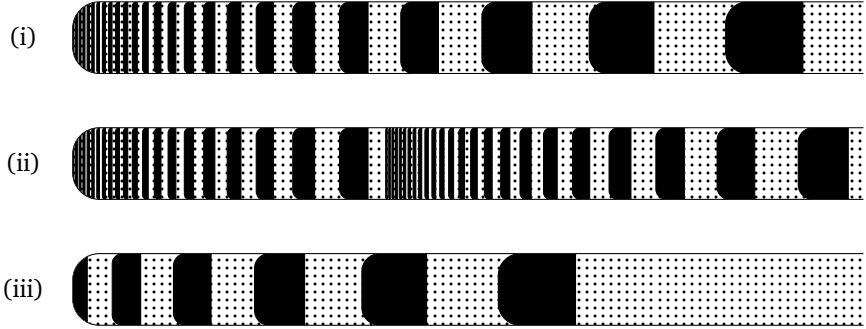 

	Therefore, there are $A,B \sse \omega$ distinct, such that the pattern of branching nodes along $U$ looks like $S_A$, and the pattern of branching nodes along $f(U)$ looks like $S_B$. In other words, we have order-isomorphisms $p_U \colon U \to (0,\infty)$ and $p_{f(U)} \colon f(U) \to (0,\infty)$ such that $p_{f(U)} \circ f \circ p_U^{-1} \colon (0,\infty) \to (0,\infty)$ is an order-automorphism mapping $S_A$ onto $S_B$. This is our contradiction. \contradiction
\end{proof}

Let us now consider the relationship between the construction just given and a forcing notion. For ease of exposition, we only deal with the case $\kappa = \aleph_0$. We force a weakly uniformly $\aleph_0$-branching, continuously gradable, rigid branchwise-real tree by considering countable approximations to it. Such objects might be called \emph{sub-branchwise-real trees}: we replace \ref{item:interval; def:brot} with the requirement that each branch be embeddable into \R. In addition, we keep track of a rank function from the approximations into $\omega$, together with a set of `non-extension promises'. The latter is a subset of the sub-branchwise-real tree consisting of degree-$1$ nodes which guarantees that these nodes never become branching. The forcing conditions are thus triples $\ab{X, \rho, S}$ consisting of: 
\begin{enumerate*}[label=(\alph*)]
	\item a countable sub-branchwise-real tree $X$ (which for concreteness we can take to be a subset of $\omega_1$),
	\item a rank function $\rho \colon X \to \omega$, and
	\item a non-extension promise set $S \sse X$ of degree-$1$ nodes.
\end{enumerate*}
For one condition to be stronger than another, we require the sub-branchwise-real tree, rank function and promise set of the former extend those of the latter, and further that we do not add new connected components above any already-branching point, so that in particular the degrees of branching points do not decrease. To ensure this latter condition, we require that for every element $x$ of the weaker approximation, every component above $x$ with respect to the stronger approximation contain at most one component above $x$ with respect to the weaker. Note that the fact that stronger conditions must extend the promise sets and that every element of the promise set must have degree $1$ ensures that we keep our promises. Let $\bb P$ be the forcing notion just described. 

\begin{theorem}
	The forcing notion $\bb P$ is countably closed, and any $\bb P$-generic set determines an $\omega$-ranked, weakly uniformly $\aleph_0$-branching, rigid branchwise-real tree.
\end{theorem}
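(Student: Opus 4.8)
I would treat the two assertions separately, proving countable closure first and then checking the generic object clause by clause, with rigidity as the crux. Throughout I take ``stronger'' to mean ``extends'', so countable closure amounts to showing that every decreasing $\omega$-sequence of conditions has a common strengthening.

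\textbf{Countable closure.} Given a decreasing sequence $\ab{X_n,\rho_n,S_n}_{n\in\omega}$, the obvious candidate lower bound is $\ab{X_\omega,\rho_\omega,S_\omega}$ with $X_\omega=\bigcup_n X_n$, $\rho_\omega=\bigcup_n\rho_n$ and $S_\omega=\bigcup_n S_n$. A countable union of countable sets is countable, and the increasing union of tree orders which are meet-semilattices at each finite stage (the extension relation being set up to preserve meets) is again a meet-semilattice tree order. The only genuinely structural point is the relaxed version of \ref{item:interval; def:brot}: every branch of $X_\omega$ must embed into \R. But every branch of $X_\omega$ is a countable linear order, so this is automatic by \cref{lem:countable ordinal in Q}. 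That $\rho_\omega$ is a rank function into $\omega$ follows since each of \ref{item:surjective; def:rank function}--\ref{item:successor; def:rank function} is preserved under increasing unions, and that $\ab{X_\omega,\rho_\omega,S_\omega}$ extends each $\ab{X_n,\rho_n,S_n}$ is immediate from transitivity of the ordering. The step needing care is that $S_\omega$ still consists of degree-$1$ nodes: if $x\in S_n$ then $x\in S_m$ for all $m\geq n$, so $x$ has degree $1$ in every $X_m$; since meets are preserved, two incomparable elements of $X_\omega$ above $x$ with meet $x$ would already witness degree $\geq 2$ at a finite stage, a contradiction. So the promises are kept.

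\textbf{The generic object.} For $\bb P$-generic $G$ I would set $X_G=\bigcup\{X\mid\ab{X,\rho,S}\in G\}$, with $\rho_G,S_G$ analogous. As a union of a directed compatible family, $X_G$ is a meet-semilattice tree order and $\rho_G\colon X_G\to\omega$ is a (total) rank function. Weak uniformity holds because a node becomes branching only by being given $\kappa-1=\aleph_0$ new spines in a single extension, after which the order freezes its degree by forbidding new components above an already-branching node; hence every branching node of $X_G$ has degree exactly $\aleph_0$. The remaining structural requirement is \ref{item:interval; def:brot}, and here genericity does the work: I would use three families of dense sets, forcing respectively order-density (insert a point strictly between any comparable pair), Dedekind-completeness (fill any cut realised along a branch), and realisation of every real on every spine, so that each maximal chain becomes a separable, Dedekind-complete dense linear order. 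Since $\rho_G$ takes values in $\omega$, every branch is a concatenation of at most $\omega$-many such pieces, one per rank level, and a countable concatenation of real intervals is again a real interval; this also makes $\rho_G$ bounded, so $X_G$ is continuously gradable by \cref{lem:countable height gradable}.

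\textbf{Rigidity, the main obstacle.} The plan is to transport the argument of \cref{thm:weakly-uniform rigid} into the extension. First I would arrange, by genericity, that the pattern of branching points laid along each spine is a sufficiently generic subset of $(0,\infty)$, so that (A) distinct spines have pairwise non-isomorphic branching patterns, and (B) the three ``shapes'' a ray can take relative to the spine decomposition — a single spine, a mixture of consecutive initial segments, or a tail entered part-way up — have pairwise non-isomorphic patterns (the forcing counterparts of the mutual non-isomorphism of the $S_A$ and of the three patterns in \cref{fig:F(U) patterns; proof:weakly-uniform rigid}). Granting (A) and (B), suppose some $p\in G$ forced a name $\dot f$ to be a non-trivial automorphism. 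Then some $p'\leq p$ forces $\dot f(\check a)\neq\check a$; extending, and using that branching points are dense along spines, I may assume a branching $z$ is moved with $f$ sending $[z,\cdot]$ off itself. A spine $U$ above $z$ maps under $f$ to a ray $W$ above $f(z)$ preserving branching, so $W$ has pattern isomorphic to that of $U$; by (B), $W$ is a single spine, and by (A) it must be $U$ itself — impossible, since $U$ lies above $z$ and $W$ above $f(z)\neq z$, and a spine has a unique base.

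The genuine difficulty is that $\dot f$ is only a name and the patterns are generic, so (A), (B) and the final contradiction must all be realised as density statements. I expect the technical heart to be a lemma asserting that for any name $\dot g$ for an order-isomorphism between the patterns of two distinct spines (or between a single-spine pattern and a composite one), the set of conditions forcing $\dot g$ to fail is dense: one extends a condition so as to add a branching point to one spine at a position that cannot be matched on the other, diagonalising against $\dot g$ along an $\omega$-chain and taking the lower bound guaranteed by countable closure. This pattern-rigidity is precisely the forcing incarnation of the ``sufficiently generic colourings'' technique, and it is where essentially all of the work lies.
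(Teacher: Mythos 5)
Your countable-closure argument is essentially the paper's (which treats that half of the theorem as immediate from the definition of $\bb P$), and your reconstruction of the extension relation — meets preserved, degrees of branching nodes frozen, promises inherited along unions — is faithful. The genuine problem is in the rigidity argument, which is also where you yourself locate ``essentially all of the work''.

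You propose to transport the $S_A$-pattern argument of \cref{thm:weakly-uniform rigid} into the extension, with the technical heart being a lemma that any name $\dot g$ for an isomorphism between the branching patterns of two spines can be densely defeated by adding ``a branching point to one spine at a position that cannot be matched on the other''. But you never supply, and your proposal nowhere uses, the only mechanism in this forcing that can make a position unmatchable: the promise set $S$. Unless the corresponding point on the second spine is placed into $S$, ``cannot be matched'' is simply false — any degree-$1$ node outside the promise set can be made branching by a further condition, and it is in fact dense to do so, so every mismatch you create along your $\omega$-chain of conditions is provisional and will generically be repaired later, leaving $\dot g$ intact. Making such mismatches permanent is precisely why the conditions carry the third coordinate $S$; your write-up invokes $S$ only for countable closure and degree-freezing, never for rigidity, which is the one place it is indispensable. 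The paper's own proof is a direct density argument built on exactly this mechanism: by countable closure it finds a single condition $\ab{X,\rho,S}$ which decides $\dot f$ on all of $X$, moves some $x$, has $z \geq x$ with $z$ and $\dot f(z)$ branching, and has $[z,w]^X$ order-dense in $[z,w]^{\ol X}$; it is then dense that some hole of $[z,w]^X$ is filled by a branching node while the corresponding hole of $[\dot f(z),\dot f(w)]^X$ is filled by a node put into the promise set, so the asymmetry is permanent and genericity contradicts $\dot f$ being an automorphism. Note also that your preliminary claims (A) and (B) do not simplify anything: they quantify over names for isomorphisms exactly as rigidity itself does, so establishing them is the same task, and the ``three shapes'' analysis you want to import relied on the hand-designed structure of the $S_A$ sets (alternating real and rational segments with irrational endpoints), which generic branching patterns do not possess. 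A lesser point: your treatment of \ref{item:interval; def:brot} for the generic tree — ground-model dense sets ``indexed by'' cuts of the generic object, plus an unargued separability claim — is thinner than it looks; the paper is equally brief on this clause, but it genuinely requires a name-by-name argument using countable closure rather than a list of dense sets.
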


\begin{proof}[Proof sketch]
	That $\bb P$ is countably closed follows immediately from the definition. Denote by $\ol X$ the union of the approximations found in $G$. It is not hard to see by genericity that $\ol X$ is a weakly uniformly $\aleph_0$-branching branchwise-real tree, and that the union of the partial rank functions yields a total rank function $\ol X \to \omega$.

	To see that $\ol X$ is rigid, take $\dot f \colon \ol X \to \ol X$ a non-trivial automorphism. By a countable-closure argument, we can find a condition $\ab{X, \rho, S}$ forcing
	\begin{enumerate*}[label=(\roman*)]
		\item that $\dot f$ is a order-preserving function,
		\item that $X \sse \dom(\dot f)$ and that $\dot f$ has a decided value on every element of $X$ and
		\item that there is $x \in X$ such that $\dot f(x)$ is incomparable with $x$.
	\end{enumerate*}
	Moreover we can assume, using another countable-closure argument, that there is $z \geq x$ in $X$ such that both $z$ and $\dot f(z)$ are branching, and furthermore that there is $w > z$ such that $[z, w]^X$ is order-dense as subset of $[z, w]^{\ol X}$. Since $[z, w]^X$ is countable, it has continuum-many `holes' to fill. Any element which fills a hole has its image under $\dot f$ fixed, given then density of $[z, w]^X$ in $[z, w]^{\ol X}$. Finally, it is dense (in the $\bb P$-forcing sense) that some hole in $[z, w]^X$ is filled with a branching node while the corresponding hole in $[\dot f(z), \dot f(w)]^X$ is filled with a non-branching node which is promised to remain non-branching. Since $G$ is generic, this means that $\dot f$ cannot be an automorphism of $\ol X$, which is a contradiction. \contradiction
\end{proof}

\section{A uniformly branching, rigid branchwise-real tree}

\label{sec:strongly uniform}

In this section, I strengthen the result of the previous, by showing that for every $\kappa$ with $2 \leq \kappa \leq \cont^+$ there is a rigid branchwise-real tree in which every point is branching of the same degree $\kappa$.

\begin{definition}
	A tree order is \emph{uniformly $\kappa$-branching} if every node has degree $\kappa$.
\end{definition}

Moreover, we can ask that such a tree be either continuously gradable (\cref{thm:strongly-uniform rigid}) or not (\cref{thm:strongly-uniform rigid non gradable}).

\begin{theorem}\label{thm:strongly-uniform rigid}
	Let $2 \leq \kappa \leq \cont^+$. There is a uniformly $\kappa$-branching, rigid branchwise-real tree which is continuously gradable.
\end{theorem}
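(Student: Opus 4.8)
The plan is to reuse the stage-by-stage skeleton of \cref{thm:weakly-uniform rigid} — growing $X$ over $\omega$ many stages with open spines and laying a fresh colouring of $(0,\infty)$ along each new spine — but to upgrade it in two ways. First, I would arrange the construction so that the final tree is genuinely \emph{uniformly} $\kappa$-branching, i.e.\ every point, not merely every branching point, has degree exactly $\kappa$. The mechanism is that at each successor stage, above each spine $U$ introduced at the previous stage, I add $(\kappa-1)$ many fresh open spines above \emph{every} point of $U$ (there is no cardinality obstruction to branching above continuum-many points of a single spine at one stage, and the result remains a meet-semilattice whose branches are real intervals). Because each point is then branched above at the very next stage, every point of finite rank ends up with degree $\kappa$; and since any branch that passes through infinitely many spines is order-isomorphic to a half-open real interval with no supremum in $\bigcup_n X_n$, the passage to the limit creates no new top points, so no point of rank $\omega$ is ever produced. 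Thus $\rho$ remains a \emph{bounded} rank function with range $\omega$, and \cref{lem:countable height gradable} delivers continuous gradability for free. The ranks and degrees are bookkept throughout using \cref{lem:component facts}.

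The difficulty is that uniform branching destroys the rigidity device of \cref{thm:weakly-uniform rigid}: there the pattern of branching-versus-degree-$1$ points along a spine was an order-invariant that recovered the set $S_A$, but now that \emph{every} point branches identically this signal is gone. The role of the sets $S_A$ is therefore taken over by a ``sufficiently generic'' family of colourings $\{c_A \colon (0,\infty) \to \lambda\}$ (constructed in \ZFC), with the two properties that (i) each $c_A$ admits no non-trivial colour-preserving order-automorphism of $(0,\infty)$, and (ii) for $A \neq B$ there is no colour-preserving order-isomorphism carrying $c_A$ to $c_B$, these persisting even when one colouring is compared with a concatenation of initial segments of others. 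The essential extra requirement is that the colour be recoverable from the order of $X$ alone: I would have the colour of a point $t$ of a spine determine an order-visible feature of the part of the tree lying above $t$ — for instance the colourings subsequently laid along the spines freshly attached at $t$ — so that, proceeding recursively, the whole colouring of $X$ becomes order-definable. Combined with \cref{lem:component facts}, which already pins down the coarse structure (the unique rank-$\rho(x)$ ``continuing'' component above a successor-rank point versus the $\kappa-1$ fresh ones), this lets an automorphism be tracked on the finer level of the fresh components.

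With the colourings in place the rigidity argument runs parallel to that of \cref{thm:weakly-uniform rigid}. Assuming a non-trivial automorphism $f$, I would find $x$ with $f(x) \neq x$, then $y > x$ with $[x,y]$ and $f([x,y])$ disjoint, then a point $z \in [x,y]$ and a spine $U$ above $z$ added during the construction; $f \restr U$ is then an order-isomorphism of $U$ onto a ray $f(U)$ through $\us{f(z)}$. Decoding colours from the order, the colouring read along $U$ is some $c_A$, and it must agree, up to order-isomorphism, with the colouring read along $f(U)$; analysing the three ways $f(U)$ can sit in the construction (a single fresh spine; a concatenation of initial segments of spines from consecutive stages; or a terminal portion of an earlier spine) and using the genericity of the family, only the single-spine case is consistent, so $f(U)$ carries some $c_B$ and the induced self-map of $(0,\infty)$ takes $c_A$ to $c_B$. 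By (i) and (ii) this forces $A = B$ and the map to be the identity, contradicting $f(x) \neq x$.

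I expect the main obstacle to be exactly the tension flagged above: making every point branching of the \emph{same} degree while keeping a colour order-recoverable, since the obvious uniform construction is maximally homogeneous (essentially $U_\kappa$) and admits many automorphisms. The whole weight of the proof therefore falls on the construction of the generic colourings and on the design of an order-definable encoding of them that is compatible with exact degree $\kappa$; this is also where I expect the bound $\kappa \leq \cont^+$ to enter, through the supply of mutually colour-distinguishable types needed to separate the $\kappa$ components above each point, with the extra ``$+1$'' over \cref{thm:weakly-uniform rigid} coming from the distinguished continuing component that \cref{lem:component facts} separates by rank at no colour cost.
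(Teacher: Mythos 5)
There is a fatal gap, and it sits exactly where you flagged the ``whole weight'' of the proof: your construction cannot be rigid, no matter what colourings you use. If you grow the tree for $\omega$ many stages, add $(\kappa-1)$ fresh spines above \emph{every} point at each successor stage, and (as you explicitly arrange) extend nothing at the limit, then the resulting partial order is precisely the minimal uniformly $\kappa$-branching tree $M_\kappa$, which by \cref{prop:M kappa props} is homogeneous --- the opposite of rigid. The colourings in your scheme are pure bookkeeping: they never influence any structural decision (every point is treated identically at every successor stage, and nothing happens at limits), so they cannot affect the isomorphism type of the final order. Consequently the ``order-definable encoding'' you hope to design cannot exist: an order-automorphism of $M_\kappa$ is oblivious to any labels you attached during construction, and there are plenty of such automorphisms. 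This also shows the appeal to the three-case pattern analysis of \cref{thm:weakly-uniform rigid} cannot be transplanted, since that argument depended on branching-versus-non-branching points being an order-invariant, which uniform branching erases; your properties (i) and (ii) of the colouring family, however strong, are never connected back to the order.

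The paper's proof supplies exactly the missing idea: the colourings must be made order-visible by letting them \emph{drive the construction}, and the only place they can do so compatibly with uniform branching is at limit stages. The construction runs for $\omega^2$ (not $\omega$) stages; at each limit stage one decides, for each newly appearing branch, whether to add a supremum on top of it, and this decision is made by reading the colours of the maximal points of each rank along the branch (extend iff the colour sequence has a tail of $1$'s). Rigidity then follows from the Generic Colouring Lemma \ref{lem:generic family colourings}, whose key property is stronger than your (i)/(ii): for any two \emph{distinct} colourings in the family, any order-automorphism of $(0,\infty)$, and any prescribed pair of colours, the pattern occurs densely. Given a non-trivial automorphism $f$ with $f(x_0)\neq x_0$, \cref{lem:component facts} produces coinitial spine-segments $I$ above $x_0$ and $f(I)$ above $f(x_0)$ carrying distinct colourings (with a separate two-step argument when $\kappa=2$, since the fresh component above $x_0$ may map into the old, continuing component above $f(x_0)$), and genericity yields $x_1\in I$ with $c(x_1)=1$, $c(f(x_1))=0$; iterating gives a sequence whose branch is extended at stage $\rho(x_0)+\omega$ while its $f$-image is not --- the contradiction. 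Note also that the bound $\kappa\leq\cont^+$ enters simply as the cardinality of the generic family $\A$ (one fresh colouring per spine per stage), not through a ``$+1$'' for the continuing component as you conjectured.
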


As in the proof of \cref{thm:weakly-uniform rigid}, our uniformly branching trees will be grown iteratively upwards. But notice that, since we require every point to have the same degree $\kappa$, at successor stages there are no choices to make: we must add $(\kappa-1)$-many new spines above every point added at the previous stage. Were we to stop this process after $\omega$-many steps, we would end up with a `minimal' uniformly branching tree, which is not only non-rigid, but moreover homogeneous, as the following result shows.

\begin{definition}
	Let $\kappa$ be a cardinal. The \emph{minimal uniformly $\kappa$-branching branchwise-real tree}, $M_\kappa$, is the branchwise-real tree built in $\omega$-many steps, starting with the root, and at successors stages adding $(\kappa-1)$-many new open spines above every point added at the previous stage.
\end{definition}

\begin{remark}
	The tree $M_\cont$ is the (underlying partial order of) the nowhere separable metric space constructed by Urysohn in \cite{UrysohnBeispielEN}.
\end{remark}

\begin{proposition}\label{prop:M kappa props}\
	\begin{enumerate}[label=(\arabic*)]
		\item $M_\kappa$ is minimal: for every uniformly $\kappa$-branching branchwise-real tree $X$ there is an order-preserving embedding $f \colon M_\kappa \to X$. Moreover, we can assume that $f$ is continuous: for any $x < y$ in $M_\kappa$ the restriction $f \restr [x,y] \colon [x,y] \to [f(x), f(y)]$ is surjective.
		\item $M_\kappa$ is homogeneous: if $x, y \in M_\kappa$ are not the root, there is an order-automorphism $f \colon M_\kappa \to M_\kappa$ such that $f(x) = y$.
	\end{enumerate}
\end{proposition}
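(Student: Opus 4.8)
For (1) the plan is to build the embedding $f \colon M_\kappa \to X$ by recursion over the $\omega$ stages of the construction of $M_\kappa$, sending the root to the root of $X$. The single fact that makes this go through is that, since $X$ is uniformly $\kappa$-branching, every point of $X$ has exactly $\kappa$ components above it, so there are always enough fresh directions available. Concretely, once a spine $\sigma \cong (0,\infty)$ of $M_\kappa$ has had its base $b$ placed at $f(b) \in X$, I pick a component $C$ above $f(b)$ and a point $c \in C$, and map $\sigma$ order-isomorphically onto the interval $(f(b),c)$; this is legitimate because $(f(b),c)$ is a real interval (a piece of a branch of $X$, by \ref{item:interval; def:brot}) and hence order-isomorphic to $(0,\infty)$. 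Then for each $t \in \sigma$ the image $f(t)$ lies in $(f(b),c)$ and has $\kappa$ components above it, exactly one of which contains the continuation $(f(t),c)$; I route the $\kappa-1$ new spines of $M_\kappa$ above $t$ into the remaining $\kappa-1$ components and recurse (at the root all $\kappa$ spines simply fill its $\kappa$ components). Because distinct spines are sent into distinct components, $f$ preserves meets and incomparability, so it is an order-embedding; and because each spine is mapped onto a full open interval whose infimum is the image of its base point, the images glue without gaps, giving exactly the continuity required.

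For the homogeneity in (2) I would first isolate the recursive building block: write $T$ for the branchwise-real tree consisting of a single spine $(0,\infty)$ with $(\kappa-1)$ copies of $T$ attached above every one of its points (the same $\omega$-stage recipe). A routine comparison of constructions then yields the two self-similarity facts on which everything rests: first, that above every point $p$ of $M_\kappa$ — the root or not — the set $\us{p}\setminus\{p\}$ is the disjoint union of exactly $\kappa$ components, each isomorphic to $T$ (for the root all $\kappa$ come from stage-$1$ spines; for other points one is the continuation of $p$'s spine and $\kappa-1$ are the new spines, but all are copies of $T$); and second, that the same holds inside $T$, i.e.\@ above every point of $T$ there are again $\kappa$ components, each a copy of $T$. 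Since any automorphism permutes the components above a fixed point (\cref{lem:component facts}\ref{item:auto factors; lem:component facts}) and any two isomorphic components can be interchanged by an automorphism that is the identity elsewhere, I may move $x$ and $y$ into the same copy of $T$ hanging above the root, reducing the whole problem to showing that $\Aut(T)$ acts transitively on the points of $T$.

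The heart of the argument is this last transitivity, and the key — and initially counterintuitive — observation is that in $T$ the distinction between the main spine and a hanging copy is not intrinsic: above any spine point $s$ the continuation of the spine and the $\kappa-1$ hanging copies are all isomorphic to $T$, hence freely interchangeable by automorphisms fixing everything $\le s$. I would assign to each $q \in T$ its depth $m$, the (necessarily finite, since every point is introduced at a finite stage) number of times one descends into a hanging copy before reaching a spine, and induct on $m$. If $m \geq 1$, so that $q$ lies in the hanging copy above a spine point $s_0$, I apply the automorphism swapping that hanging copy with the spine-continuation component above $s_0$; this carries $q$ into the continuation of the base spine, whose own spine and hanging copies are those of $T$, so the depth of $q$ drops by one. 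After finitely many such swaps $q$ sits on the base spine, and a lift of an order-automorphism of $(0,\infty)$ (carrying the hanging copies along) then places it at any prescribed spine point. Composing the maps obtained for $x$ and for $y$ gives the desired automorphism. The main obstacle is precisely to see and correctly exploit this non-intrinsic spine/branch symmetry; once the depth-reduction swap is identified, checking that it is a genuine automorphism and bookkeeping the depth are routine.
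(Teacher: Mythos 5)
Your proposal is correct and is essentially the paper's own argument: for (1) the paper also builds the embedding by routing each spine into a fresh component above the image of its base (organized as a Zorn's-Lemma-style maximality argument rather than your explicit stage-by-stage recursion), and for (2) your depth-reducing swap of the hanging copy containing $q$ with the spine-continuation component above $s_0$ is exactly the paper's rank-reducing ``twist'' $r_z$, with both proofs finishing by lifting an order-automorphism of $(0,\infty)$ along the common spine. Your auxiliary self-similar tree $T$ is a clean repackaging of the paper's rank bookkeeping, not a genuinely different method.
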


\begin{proof}
	First note the construction of $M_\kappa$ yields a rank function $\rho \colon M_\kappa \to \omega$.
	\begin{enumerate}[label=(\arabic*)]
		\item We can build an order-preserving embedding $M_\kappa \to X$ using a Zorn's Lemma style argument, essentially following the proof of \cref{lem:every to rank function}.
		\item First take any $z \in M_\kappa$ such that $n = \rho(z)>1$. We define an automorphism $r_z \colon M_\kappa \to M_\kappa$ which steps down $z$'s rank: $\rho(r_z(z)) = n-1$. Let $w$ be the greatest element of $\ds z$ of rank $n-1$. Then $z$ lies on a spine added above $w$ at stage $n$. Let $C$ be the component above $w$ which contains $z$ (which component is of rank $n$). Let $D$ be the component above $w$ which contains the rest of the rank-$(n-1)$ spine on which $w$ lies (which is of rank $n-1$). See \cref{fig:twist; proof:M kappa props} for a picture of the situation. It is easy to define an isomorphism $s \colon C \to D$ such that the rank-$n$ spine on which $z$ lies is mapped to the rest of the rank-$(n-1)$ spine above $w$. Letting $r_z$ be the automorphism which swaps $C$ and $D$ via $s$, we see that $\rho(r_z(z)) = n-1$.

		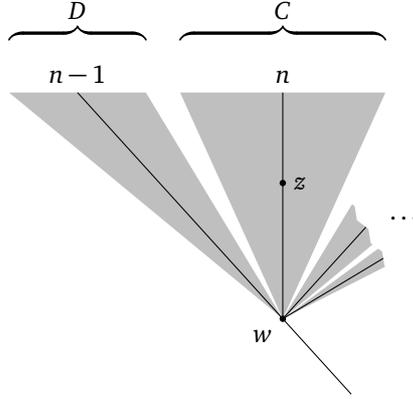
\begin{figure}[ht]
			\centering
			\begin{tikzpicture}[xscale=0.9]
				\fill[lightgray] (-4,3) -- (0,0) -- (-2,3);
				\draw (1,-1) -- (-3,3) node [above] {$n - 1$};
				\fill[lightgray] (-1.5,3) -- (0,0) -- (1.5,3);
				\draw (0,0) -- node[pos=0.6] (z) {} (0,3) node [above] {$n$} node (t1) {};
				\node[point] at (z) [label=right:$z$] {};
	
				\begin{scope}
					\clip[decoration={zigzag, amplitude=1pt, segment length=10pt}, decorate] (0,-1) -- (2,0) -- (0,3) -- (-1,0) -- (0,-1);
					\fill[lightgray] (2,3) -- (0,0) -- (4,3);
					\draw (0,0) -- (3,3);
					\fill[lightgray] (4.5,3) -- (0,0) -- (6.5,3);
					\draw (0,0) -- (5.5,3);
				\end{scope}
				\node at (1.8,1.3) {$\cdots$};
	
				\draw[brace] (-4,3.7) -- node[above=4pt] {$D$} (-2,3.7);
				\draw[brace] (-1.5,3.7) -- node[above=4pt] {$C$} (1.5,3.7);
	
				\node[point] at (0,0) [label=below left:$w$] {};
			\end{tikzpicture}
			\caption{The situation when reducing $z$'s rank. The point $z$ lies on a rank-$n$ spine added above $w$, in the component $C$ above $w$. The rank-$(n-1)$ spine on which $w$ lies continues into the component $D$ above $w$. The automorphism $r_z$ swaps the components $C$ and $D$.}
			\label{fig:twist; proof:M kappa props}
		\end{figure}

		Now take $x, y \in M_\kappa$ non-root. By repeating the above procedure, we can assume that both have rank $1$. Moreover, by performing another `twist', sending the component above the root containing $y$ to that containing $x$, we may assume that $x$ and $y$ are comparable, lying on the same spine; say $x \leq y$. Finally, it is not hard to define an automorphism of $M_\kappa$ whose restriction to the rank-$1$ spine containing $x$ and $y$ looks like an automorphism of $(0,\infty)$, and which maps $x$ to $y$.\qedhere
	\end{enumerate}
\end{proof}

Turning back to the construction of rigid trees, in the proof of \cref{thm:strongly-uniform rigid}, as well as \cref{thm:strongly-uniform rigid non gradable} below, the notion of a `colouring' of the positive real numbers $(0, \infty)$ plays an auxiliary role. A colouring of the positive real numbers is simply a function with domain $(0, \infty)$; we think of elements of the range as `colours'. 

\begin{definition}
	Let $X$ and $S$ be sets. An \emph{$S$-colouring} of $X$ is a function $X \to S$.
\end{definition}

In the proof of \cref{thm:weakly-uniform rigid}, the first step was essentially to construct a family of black-white colourings of $(0, \infty)$ with suitable properties. To construct a uniformly branching, rigid branchwise-real tree, we extend this idea. This time we look for a family of $2$-colourings (and later $\omega$-colourings) which is `sufficiently generic'. This means that for any pair of distinct colourings, any order-automorphism of $(0, \infty)$ and any pair of colours, we can densely often find a point coloured with the first colour under the first colouring whose image under the automorphism is coloured with the second colour under the second colouring. This is the following result.

\begin{lemma}[Generic Colouring Lemma]\label{lem:generic family colourings}
	Let $\lam \leq \cont$ be a cardinal. There is a family $\A$ with size $\cont^+$ of $\lam$-colourings $(0,\infty) \to \lam$ of the positive real numbers such that the following holds. For any $c,d \in \A$ distinct, for any order-automorphism $p \colon (0, \infty) \to (0, \infty)$ and for any $\alpha,\beta \in \lam$ there is a dense set of $x \in (0, \infty)$ such that $c(x) = \alpha$ and $d(p(x)) = \beta$.
\end{lemma}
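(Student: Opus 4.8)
The plan is to build the family $\A = \{c_\eta \mid \eta < \cont^+\}$ by transfinite recursion of length $\cont^+$, constructing each $c_\eta$ to be \emph{generic over} all the earlier colourings $\{c_\zeta \mid \zeta < \eta\}$, which by that point are already total functions. Throughout I maintain the invariant that each $c_\eta$ is \emph{$\cont$-dense in every colour}: for every $\gamma \in \lam$ and every open interval $I$, the set $\{x \in I \mid c_\eta(x) = \gamma\}$ has size $\cont$. This invariant is exactly what keeps the candidate pools large at later stages. Since the order-automorphisms of $(0,\infty)$ are precisely the continuous strictly increasing bijections, there are exactly $\cont$ of them, and each carries open intervals to open intervals bijectively; I also fix an enumeration of the countably many rational intervals, which will witness density.

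At stage $\eta$ I list the tasks to be handled. For each $\zeta < \eta$, each automorphism $p$, each $\alpha,\beta \in \lam$ and each rational interval $I$, there are two genericity tasks: (A) choose $x \in I$ with $c_\zeta(p(x)) = \beta$ and set $c_\eta(x) = \alpha$; (B) choose $x \in I$ with $c_\zeta(x) = \alpha$ and set $c_\eta(p(x)) = \beta$. Task (A) handles the pair $(c_\eta, c_\zeta)$ of the lemma and task (B) the pair $(c_\zeta, c_\eta)$, so together they cover both orders. I add, for each $\gamma \in \lam$, each rational interval $I$ and each $j < \cont$, a self-density task that colours a fresh point of $I$ with $\gamma$; these force the invariant for $c_\eta$. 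Counting gives $|\eta| \cdot \cont \cdot \lam^2 \cdot \aleph_0 + \lam \cdot \aleph_0 \cdot \cont = \cont$ tasks, and --- this is the crucial bookkeeping point --- I enumerate them in order type exactly $\cont$, so that every proper initial segment of the enumeration has cardinality $< \cont$.

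I then process the tasks one at a time, maintaining a partial function $c_\eta$ whose domain $D$ grows by at most one point per task, always choosing the witness outside the current $D$ so that no two commitments ever conflict. At the step for any task, $|D| < \cont$, because the task's position in the enumeration is an ordinal of cardinality $< \cont$. For a type-(A) task the candidate pool $I \cap p^{-1}\!\big(c_\zeta^{-1}(\beta)\big)$ has size $\cont$ by the invariant on $c_\zeta$ (as $p^{-1}$ maps the interval $p(I)$ onto $I$ bijectively), so I may pick $x$ in the pool with $x \notin D$ and set $c_\eta(x) = \alpha$; type-(B) and self-density tasks are symmetric, with (B) additionally discarding the fewer-than-$\cont$ points $x$ for which $p(x) \in D$. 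After all $\cont$ tasks the self-density tasks have already secured the invariant for $c_\eta$, and I extend $c_\eta$ to a total colouring by assigning every remaining point an arbitrary colour.

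For the verification, take distinct $c = c_{\eta_1}$ and $d = c_{\eta_2}$, an automorphism $p$ and colours $\alpha,\beta$. If $\eta_2 < \eta_1$, the type-(A) tasks at stage $\eta_1$ with $\zeta = \eta_2$ produced, in every rational interval $I$, a point $x$ with $c(x) = \alpha$ and $d(p(x)) = \beta$; if $\eta_1 < \eta_2$, the type-(B) tasks at stage $\eta_2$ do the same. Since the rational intervals form a base, these witnesses are dense, which is precisely the conclusion. The main obstacle is the cardinality bookkeeping: a single colouring is implicated in $\cont^+$ of the requirements, so one cannot hope to keep all domains of size $< \cont$ globally. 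The resolution is to localise the work to each stage, where only $\cont$ tasks arise, and to run through them in order type $\cont$ so that at each individual task the committed set is genuinely smaller than the size-$\cont$ candidate pools --- the $\cont$-density invariant being exactly the property that keeps those pools large.
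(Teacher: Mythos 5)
Your proof is correct, and its combinatorial engine coincides with the paper's: a new colouring is made generic over at most $\cont$ existing ones by processing the $\cont$-many requirements (earlier colouring, automorphism, pair of colours, interval) in a well-ordering of type $\cont$, keeping the committed domain of size $<\cont$ at each step, with the witness pools kept of size $\cont$ by the invariant that every earlier colouring takes every colour continuum-often in every interval (the paper's property (P)). The difference is the wrapper. The paper invokes Zorn's Lemma: it takes a maximal family satisfying the statement together with (P), proves such families exist by a separate explicit construction (tail-equivalence classes of binary expansions, grouped into batches), and then derives a contradiction by applying essentially your extension step to a purported maximal family of size $\le\cont$. You instead iterate the extension step $\cont^+$ times directly, and your self-density tasks do double duty: they furnish the base case (stage $0$ consists of nothing else) and propagate the invariant to each new colouring --- a point the paper leaves implicit, since it never explicitly checks that its newly built colouring $d$ again satisfies (P) (it does, because each colour is imposed on a dense set of new points at continuum-many stages). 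You also cover the two orders of each pair explicitly via your (A)/(B) tasks, whereas the paper obtains the reversed order only implicitly, from the tuples containing the inverse automorphism $p^{-1}$. Net effect: the two arguments are equivalent in substance; the paper's is shorter but delegates these verifications to the reader, while yours is more self-contained (no ad hoc construction of a first colouring is needed) and makes fully explicit the cardinality bookkeeping that actually carries the proof.
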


To prove this we need the following basic result concerning automorphisms of the positive real numbers.

\begin{lemma}\label{lem:number of order-isomorphisms of R}
	There are exactly $\cont$-many order-automorphisms $(0, \infty) \to (0, \infty)$.
\end{lemma}

\begin{proof}
	Every real number is the limit of rationals below it, so any order-automorphism $(0, \infty) \to (0, \infty)$ is determined by its values on $(0, \infty) \cap \Q$, of which there are $\cont$-many possibilities.
\end{proof}

\begin{proof}[Proof of \cref{lem:generic family colourings}]
	By Zorn's Lemma, we can take a maximal family $\A$ of $\lam$-colourings, satisfying both the property in the statement, as well as that for every $c \in \A$:
	\begin{equation}
		\parbox{\textwidth-3cm}{for every $\alpha \in \lam$ and for every $b > a > 0$ there are continuum-many points $x \in (a, b)$ such that $c(x)=\alpha$} \tag{P}\label{state:contiuum; proof:generic family colourings}
	\end{equation}
	Let us first see that $\A$ is non-empty. For convenience, I will construct a $(0,1)$-colouring satisfying \eqref{state:contiuum; proof:generic family colourings}; this can then easily be adapted to a $(0, \infty)$-colouring. It suffices to take $\lam=\cont$. Consider the elements of $(0,1)$ as binary $\omega$-sequences representing binary expansions (where, to avoid ambiguity, we make some canonical choice for the binary representation of $x \in (0,1)$, in the case that two such representations are possible). Define an equivalence relation on these, where two sequences are related if one is a tail of the other. Each equivalence class is countable, hence there are continuum-many. Moreover, each equivalence class is dense in $(0,1)$. Group the classes into continuum-many batches of size $\cont$, and colour all of each batch with a different element of $\cont$. The resulting colouring satisfies \eqref{state:contiuum; proof:generic family colourings}.

	Now suppose for a contradiction that $\abs\A < \cont^+$. We will extend $\A$ by diagonalising against the previous colourings, and against every automorphism. Let $\Aut(0, \infty)$ be the set of order-automorphisms $(0, \infty) \to (0, \infty)$. By \cref{lem:number of order-isomorphisms of R}, the set $\Aut(0, \infty) \times \A \times \lam \times \lam$ has size $\cont$. Enumerate:
	\begin{equation*}
		\Aut(0, \infty) \times \A \times \lam \times \lam = \{(p_\theta,c_\theta, \alpha_\theta, \beta_\theta) \mid \theta < \cont\}
	\end{equation*}
	We build a new colouring $d \colon (0, \infty) \to \lam$ recursively in stages $d_\theta$ for ${\theta< \cont}$, each of which is a partial colouring with domain of size less than $\cont$. Assume we have constructed $d_\mu$ for $\mu < \theta$. First let $d_\theta' \defeq \bigcup_{\mu<\theta} d_\mu$. Now, since $c_\theta$ satisfies \eqref{state:contiuum; proof:generic family colourings}, and since $d_\theta'$ has domain of size less than $\cont$, we can find a countable dense set of $X \sse (0, \infty)$ such that for every $x \in X$ we have $c_\theta(x) = \alpha_\theta$ while $d_\theta'(p_\theta(x))$ is undefined. Extend $d_\theta'$ to $d_\theta$ by letting $d_\theta(p_\theta(x)) \defeq \beta_\theta$, for every $x \in X$. Finally, let $d$ be the union $\bigcup_{\theta < \cont} d_\theta$, filling in any points which remain uncoloured arbitrarily. But now, $\A \cup \{d\}$ is a larger family, contradicting the maximality of $\A$. \contradiction
\end{proof}

With this lemma established, we can now construct our uniformly $\kappa$-branching, rigid branchwise-real tree which is continuously gradable.

\begin{proof}[Proof of \cref{thm:strongly-uniform rigid}]
	Let $\A$ be a $\cont^+$-sized family of $2$-colourings of $(0,\infty)$ as per the Generic Colouring Lemma \ref{lem:generic family colourings}. This time, we construct $X$ in stages $X_\alpha$ for $\alpha < \omega^2$. When we add a new spine we lay a new colouring from $\A$ along it, and thus we also build colourings $c_\alpha \colon X_\alpha \to 2$. At limit stages, we use these colourings to decide which branches to extend. We also keep track of the rank $\rho$ of elements of $X$, so that the points added at stage $\alpha$ get rank $\alpha$. As before, we need to make sure we never run out of colourings, so we can partition $\A$ into $\omega^2$-many batches of size $\cont^+$.

	Start with $X_0$ the singleton, coloured $0$. Assume that $X_\alpha$ and $c_\alpha$ are constructed, where $\alpha$ is $0$ or a limit. To make $X_{\alpha+1}$, above all of the points of rank $\alpha$, we add $\kappa$-many new open spines. For each spine added, we pick a different, unused colouring $c \in \A$, and use it to define $c_{\alpha+1}$ on that spine. Now assume that $X_\alpha$ and $c_\alpha$ are constructed, where $\alpha$ is a successor. To make $X_{\alpha+1}$, above each point of rank $\alpha$, we add $(\kappa-1)$-many new open spines, again colouring them with new colourings from $\A$.

	So take $\alpha < \omega^2$ a limit, and assume we have constructed $X_\beta$ and $c_\beta$ for $\beta < \alpha$. First let $X_\alpha' \defeq \bigcup_{\beta < \alpha} X_\alpha$ and $c_\alpha' \defeq \bigcup_{\beta < \alpha} c_\alpha$. Then $X_\alpha'$ admits a number of new branches which appear in the limit: in other words, branches through $X_\alpha'$ containing elements of rank unbounded in $\alpha$. It is these branches which we will decide to extend or not. Let $B$ be any such branch. Then for every $\beta < \alpha$, there is a maximum element $x_\beta \in B$ of rank $\beta$. Consider the sequence $(c_\alpha'(x_0), c_\alpha'(x_1), \ldots)$ of the colours of such points. We will extend $B$ if and only if this sequence has a tail consisting of $1$'s. To extend $B$, add a new point to $X_\alpha'$ lying directly above $B$, and colour it $0$. Once we have carried out this procedure for every new branch through $X_\alpha'$, we arrive at our new pair $\ab{X_\alpha, c_\alpha}$.

	Finally, let $X \defeq \bigcup_{\alpha < \omega^2} X_\alpha$ and $c \defeq \bigcup_{\alpha < \omega^2} c_\alpha$. Note that at each stage we made sure that every point is branching of degree $\kappa$, so $X$ is uniformly $\kappa$-branching. Moreover, $\rho \colon X \to \omega^2$ is a bounded rank function, so $X$ is continuously gradable by \cref{lem:countable height gradable}.

	Let us see that $X$ is rigid. Suppose for a contradiction that $f \colon X \to X$ is a non-trivial automorphism. Then there is $x_0 \in X$ such that $f(x_0) \neq x_0$. When $\kappa \geq 3$, the proof is simpler, so let's deal with that case first. See \cref{fig:kappa geq 3; proof:strongly-uniform rigid} for a representation of the situation. Consider the components above above $x_0$ and above $f(x_0)$. By \cref{lem:component facts}, $f$ factors through a bijection of the components above $x_0$ to those above $f(x_0)$. Moreover, all but at most one component above $x_0$ is of rank $\rho(x_0)+1$, and similarly for $f(x_0)$. Since $\deg(x_0) = \deg(f(x_0)) = \kappa \geq 3$ there is a component $C$ above $x_0$ of rank $\rho(x_0)+1$ which maps to a component $f(C)$ above $f(x_0)$ of rank $\rho(f(x_0))+1$. Furthermore, using \cref{lem:component facts} again, there is an coinitial interval $I \sse C$ of constant rank $\rho(x_0)+1$ which maps to a coinitial interval $f(I) \sse f(C)$ of constant rank $\rho(f(x_0))+1$. Then $I$ is an initial segment of a spine $U$ added at stage $\rho(x_0)+1$, while $f(I)$ is an initial segment of a spine $V$ added at stage $\rho(f(x_0))+1$.

	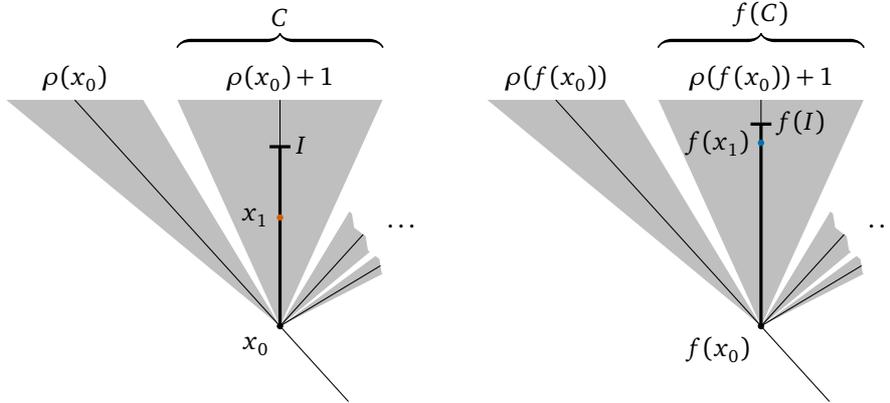
\begin{figure}
		\centering
		\begin{tikzpicture}
			\begin{scope}[xscale=0.9]
				\fill[lightgray] (-4,3) -- (0,0) -- (-2,3);
				\draw (1,-1) -- (-3,3) node [above] {$\rho(x_0)$};
				\fill[lightgray] (-1.5,3) -- (0,0) -- (1.5,3);
				\draw (0,0) -- (0,3) node [above] {$\rho(x_0) + 1$} node (t1) {};

				\begin{scope}
					\clip[decoration={zigzag, amplitude=1pt, segment length=10pt}, decorate] (0,-1) -- (2,0) -- (0,3) -- (-1,0) -- (0,-1);
					\fill[lightgray] (2,3) -- (0,0) -- (4,3);
					\draw (0,0) -- (3,3);
					\fill[lightgray] (4.5,3) -- (0,0) -- (6.5,3);
					\draw (0,0) -- (5.5,3);
				\end{scope}
				\node at (1.8,1.3) {$\cdots$};

				\draw[very thick,{-|}] (0,0) -- node[pos=1,right=2pt] {$I$} node[pos=0.6] (x1) {} ($0.8*(t1)$);
				\node[point,pallette_vermillion] at (x1) [label=left:$x_1$] {};

				\draw[brace] (-1.5,3.7) -- node[above=4pt] {$C$} (1.5,3.7);

				\node[point] at (0,0) [label=below left:$x_0$] {};
			\end{scope}
			\begin{scope}[xscale=0.9, xshift=200]
				\fill[lightgray] (-4,3) -- (0,0) -- (-2,3);
				\draw (1,-1) -- (-3,3) node [above] {$\rho(f(x_0))$};
				\fill[lightgray] (-1.5,3) -- (0,0) -- (1.5,3);
				\draw (0,0) -- (0,3) node [above] {$\rho(f(x_0)) + 1$} node (ft1) {};

				\begin{scope}
					\clip[decoration={zigzag, amplitude=1pt, segment length=10pt}, decorate] (0,-1) -- (2,0) -- (0,3) -- (-1,0) -- (0,-1);
					\fill[lightgray] (2,3) -- (0,0) -- (4,3);
					\draw (0,0) -- (3,3);
					\fill[lightgray] (4.5,3) -- (0,0) -- (6.5,3);
					\draw (0,0) -- (5.5,3);
				\end{scope}
				\node at (1.8,1.3) {$\cdots$};

				\draw[very thick,{-|}] (0,0) -- node[pos=1,right=2pt] {$f(I)$} node[pos=0.9] (fx1) {} ($0.9*(ft1)$);
				\node[point,pallette_blue] at (fx1) [label=left:$f(x_1)$] {};

				\draw[brace] (-1.5,3.7) -- node[above=4pt] {$f(C)$} (1.5,3.7);

				\node[point] at (0,0) [label=below left:$f(x_0)$] {};
			\end{scope}
		\end{tikzpicture}
		\caption{Picking $x_1$ when $\kappa \geq 3$. The grey regions represent connected components, while thin lines represent spines. The ordinal above a spine indicates its rank.}
		\label{fig:kappa geq 3; proof:strongly-uniform rigid}
	\end{figure}

	Now, consider the colouring $c$ along $I$ and along $f(I)$. By construction, these come from different colourings from $\A$. The order-isomorphism $f \restr I \colon I \to f(I)$ induces an isomorphism $U \to V$, which in turn induces an automorphism of $(0, \infty)$. Hence by the key property of the family $\A$ guaranteed by the Generic Colouring Lemma \ref{lem:generic family colourings}, there is $x_1 \in I$ coloured $c(x_1) = 1$ such that its image under $f$ is coloured $c(f(x_1)) = 0$. 

	Iterate this process to produce a sequence $x_0 < x_1 < \cdots$ such that $c(x_n) = 1$ and $c(f(x_n))=0$ for every $n > 0$. Then $\{x_0,x_1, \ldots\}$ determines a branch $B$ through $X_{\rho(x_0) + \omega}'$, while $\{f(x_0),f(x_1), \ldots\}$ determines a branch $f(B)$ through $X_{\rho(f(x_0)) + \omega}'$. Moreover, when deciding whether to extend $B$ at stage $\rho(x_0) + \omega$, we use a sequence whose tail is $(x_0,x_1, \ldots)$, and respectively for $f(B)$. But then the former branch gets extended, while the latter does not, contradicting that $f$ is an automorphism. \contradiction

	Let us now turn to the case $\kappa=2$. The issue here is that we can no longer guarantee that an initial segment of a new spine added above $x_0$ gets mapped to an initial segment of a new spine added above $f(x_0)$. Here's how we proceed. If the component $C$ above $x_0$ of rank $\rho(x_0)+1$ is mapped to the component above $f(x_0)$ of rank $\rho(f(x_0))+1$, proceed as before. Otherwise, the rank of $f(C)$ must be $\rho(f(x_0))$. See \cref{fig:kappa eq 2; proof:strongly-uniform rigid} for a representation of the situation. By \cref{lem:component facts} there is a coinitial interval $I \sse C$ of constant rank whose image $f(I) \sse f(C)$ has constant rank. So $I$ is an initial segment of a spine added above $x_0$ at stage $\rho(x_0)+1$ which maps onto a segment of a spine which already exists at stage $\rho(f(x_0))$. Now, using the key property of the colouring $\A$, pick any element $x_0'$ in the interior of $I$ with colour $c(x_0') = 1$ whose image has colour $c(f(x_0')) = 0$. Then we must have that the component $C'$ above $x_0'$ of rank $\rho(x_0')+1$ is mapped to the component above $f(x_0')$ of rank $\rho(f(x_0'))+1$, and so we can proceed as before to obtain $x_1 > x_0'$ coloured $c(x_1) = 1$ whose image is coloured $c(f(x_1))=0$. Iterating, we again build a sequence $x_0 < x_1' < x_1 < \cdots$ through $X$, all of whose elements (except possibly $x_0$) are coloured $1$ but with image coloured $0$. When deciding whether to extend the corresponding branch through $X_{\rho(x_0)+\omega}'$, we use the colours of a subsequence of $(x_0, x_1', x_1, \ldots)$. In the end, we find that this sequence has a limit in $X$, while its image does not. \contradiction
\end{proof}

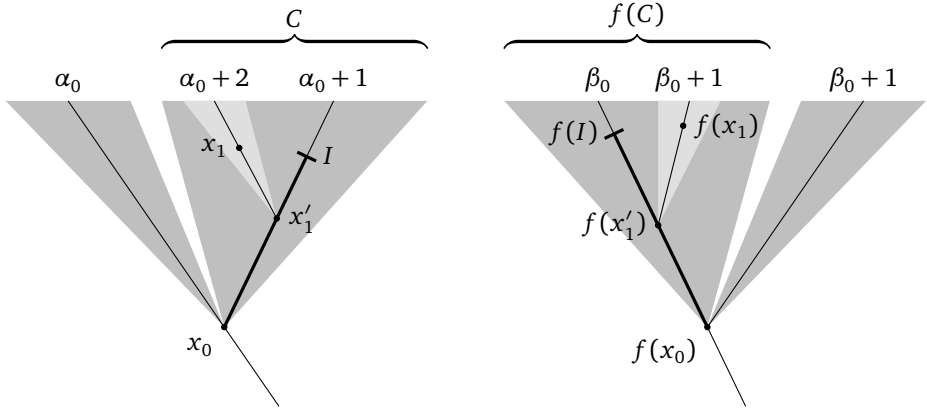
\begin{figure}
	\centering
	\begin{tikzpicture}[xscale=0.99]
		\begin{scope}[xscale=0.83]
			\fill[lightgray] (-1.5,3) -- (0,0) -- (-3.5,3);
			\draw (0.875,-1.05) -- (-2.5,3) node [above] {$\alpha_0$};
			\fill[lightgray] (-1,3) -- (0,0) -- (3.25,3);
			\draw (0,0) -- (1.75,3) node [above,xshift=0] {$\alpha_0 + 1$} node (t1) {};

			\path (0,0) -- node[pos=0.6, shape=coordinate] (x1p) {} ($0.8*(t1)$) node (ti) {};

			\fill[lightgray!50!white] ($(0,3)!(x1p)!(6,3) + (-1.5,0)$) -- (x1p) -- ($(0,3)!(x1p)!(6,3) + (-0.5,0)$);
			\draw (x1p) -- node[pos=0.6, shape=coordinate] (x1) {} ($(0,3)!(x1p)!(6,3) + (-1,0)$) node [above] {$\alpha_0 + 2$};
			\node[point] at (x1) [label=left:$x_1$] {};

			\draw[very thick,{-|}] (0,0) -- node[pos=1,right=2pt] {$I$} (ti);
			\node[point] at (x1p) [label=right:$x_1'$] {};

			\draw[brace] (-1,3.7) -- node[above=4pt] {$C$} (3.25,3.7);

			\node[point] at (0,0) [label=below left:$x_0$] {};
		\end{scope}
		\begin{scope}[xscale=0.83, xshift=220]
			\fill[lightgray] (1,3) -- (0,0) -- (-3.25,3);
			\draw (0.6125,-1.05) -- (-1.75,3) node [above, xshift=0] {$\beta_0$} node (ft1) {};
			\fill[lightgray] (1.5,3) -- (0,0) -- (3.5,3);
			\draw (0,0) -- (2.5,3) node [above] {$\beta_0 + 1$};

			\path (0,0) -- node[pos=0.5, shape=coordinate] (fx1p) {} ($0.9*(ft1)$) node (fti) {};

			\coordinate (ft3) at ($(0,3)!(fx1p)!(6,3) + (0.5,0)$);

			\fill[lightgray!50!white] ($(ft3) + (0.5,0)$) -- (fx1p) -- ($(ft3) + (-0.5,0)$);
			\draw (fx1p) -- node[pos=0.8, shape=coordinate] (fx1) {} (ft3) node [above] {$\beta_0 + 1$};
			\node[point] at (fx1) [label=right:$f(x_1)$] {};

			\draw[very thick,{-|}] (0,0) -- node[pos=1,left=2pt] {$f(I)$} (fti);
			\node[point] at (fx1p) [label=left:$f(x_1')$] {};

			\draw[brace] (-3.25,3.7) -- node[above=4pt] {$f(C)$} (1,3.7);

			\node[point] at (0,0) [label=below left:$f(x_0)$] {};
		\end{scope}
	\end{tikzpicture}
	\caption{Picking $x_1$ when $\kappa = 2$ and $\rho(f(C)) = \rho(f(x_0))$. The dark and light grey regions represent connected components, while thin lines represent spines. The ordinal above a spine indicates its rank, where $\alpha_0 = \rho(x_0)$ and $\beta_0=\rho(f(x_0))$.}
	\label{fig:kappa eq 2; proof:strongly-uniform rigid}
\end{figure}

With this \ZFC\ result established, let us examine once more the connection with a forcing notion. We again work with the $\kappa=\aleph_0$ case. We will define forcing conditions similarly to the discussion at the end of \cref{sec:weakly uniform}. This time however, instead of making promises that some points won't become branching, we promise that certain branches won't get ever get extended. There are a number of different ways of achieving this.
\begin{enumerate*}[label=(\arabic*)]
	\item The most direct way is to include in each condition a set of branches through the sub-branchwise-real tree, which are promised never to be extended. A technical issue which arises is that a branch through a smaller sub-branchwise-real tree need not be a branch through a larger one, since it need not be maximal as a chain any more. To make sure that the set of promises remains valid, we require that a stronger condition include a promise set which contains the unique extension of each promised branch in the weaker tree to the larger one.
	\item A more elegant method is to keep track instead of an order-preserving function from the approximations into \R. This way the branches which are not to be extended are exactly those on which the order-preserving function is unbounded.
	\item Yet a third way is make use of the universal continuously-gradable branchwise-real tree $U_\kappa$ defined in \cref{rem:cont grad not implies bounded rank}. Any maximal antichain $A$ through $U_\kappa$ defines a uniformly $\kappa$-branching branchwise-real tree: take $\{x \in U_\kappa \mid \exists a \in A \colon x < a\}$. We can then consider simply the set of all antichains in $U_\kappa$ under extension as a forcing poset.
\end{enumerate*}
For the following result, I remain agnostic as to the exact method used.

\begin{theorem}
	A generic tree $\ol X$ for the forcing just described is a uniformly $\kappa$-branching, rigid branchwise-real tree.
\end{theorem}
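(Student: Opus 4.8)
The plan is to mirror the forcing argument sketched at the end of \cref{sec:weakly uniform}, replacing the promise that certain \emph{points} stay non-branching by the promise that certain \emph{branches} stay unextended, as in the description above. Write $\ol X \defeq \bigcup\{X_q \mid q \in G\}$ for the union of the sub-branchwise-real trees appearing in a generic filter $G$, and let $\rho$ be the union of the partial rank functions. That $\bb P$ is countably closed is immediate, since the coordinatewise limit of a descending $\omega$-chain of conditions is again a condition below all of them; hence the forcing adds no new $\omega$-sequences. Standard density arguments then show that $\ol X$ is a branchwise-real tree: each branch has countable cofinality, every internal gap that ought to be filled is filled generically, and a branch fails to acquire a supremum precisely when it lies in the non-extension promise, so every branch is order-isomorphic to a real interval and $\rho$ is a total rank function. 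Uniform $\kappa$-branching follows by genericity: for each $x \in X_q$ it is dense to have all $\kappa$ components above $x$ present, and since stronger conditions never insert a new component above an already-branching point, no degree overshoots $\kappa$. This part is routine.

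For rigidity, suppose toward a contradiction that a condition forces a $\bb P$-name $\dot f$ to be a non-trivial order-automorphism of $\ol X$. Using countable closure I would first descend to a condition $q = \ab{X, \rho, S}$ forcing: (i) $\dot f$ is order-preserving; (ii) $X \sse \dom(\dot f)$ and $\dot f$ takes a decided value in $X$ at every element of $X$; and (iii) there is $x \in X$ with $\dot f(x)$ incomparable to $x$, which can be arranged from non-triviality by passing to a point above a moved one. A short tree computation then shows that $\dot f(z)$ is incomparable to $z$ for every $z \geq x$: otherwise $x$ and $\dot f(x)$ would both lie in a single chain $\ds z$ or $\ds{\dot f(z)}$ and hence be comparable, against (iii). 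By a second countable-closure argument I would strengthen $q$ so that there is $z \geq x$ with both $z$ and $\dot f(z)$ branching, and a $w > z$ for which $[z,w]^X$ is order-dense in $[z,w]^{\ol X}$, with $\dot f \restr [z,w]^X$ decided and mapping $[z,w]^X$ order-densely into $[\dot f(z),\dot f(w)]^X$. Since $\dot f$ is forced to be an order-isomorphism on intervals and $[z,w]^X$ is dense, the image under $\dot f$ of any future point filling a gap of $[z,w]^X$ is already determined: it is the point filling the matching gap of $[\dot f(z),\dot f(w)]^X$.

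The contradiction is then produced by a single density argument. As $z$ and $\dot f(z)$ are incomparable, the intervals $[z,w]^{\ol X}$ and $[\dot f(z),\dot f(w)]^{\ol X}$ are disjoint, and the countable dense set $[z,w]^X$ leaves continuum-many gaps, matched bijectively and disjointly with the gaps of $[\dot f(z),\dot f(w)]^X$. I would show it is dense in $\bb P$ to find, for some gap $h$ of $[z,w]^X$, a condition which \emph{extends} the branch through $h$ — adding a top point $t$ that fills $h$ — while \emph{promising} that the matching branch through $\dot f(h)$ is never extended (adding it to the non-extension promise, in whichever of the three formulations is used). Meeting this dense set with $G$ forces $h$ to acquire a supremum $t$ in $\ol X$ whereas $\dot f(h)$ acquires none. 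But then $\dot f(t)$, which lies in $\ol X$, is forced to be the least upper bound of the $\dot f$-images of the $h$-branch, i.e.\ the supremum of the $\dot f(h)$-branch, which does not exist — so $\dot f$ is not even total, let alone an automorphism.

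The main obstacle will be the density claim in the third paragraph: I must check that extending the $h$-branch and simultaneously forbidding the $\dot f(h)$-branch is consistent with the already-decided fragment of $\dot f$ and with the standing promises in $S$. The disjointness of $[z,w]^{\ol X}$ and $[\dot f(z),\dot f(w)]^{\ol X}$ — itself a consequence of the incomparability of $z$ and $\dot f(z)$ — is exactly what makes the two operations independent: filling $h$ does not fill $\dot f(h)$, and promising $\dot f(h)$ unextended creates no retroactive conflict with the decided part of $\dot f$. Verifying this independence carefully, and matching it against whichever concrete promise formulation is chosen, is where the real work lies; the verification that $\ol X$ is a uniformly $\kappa$-branching branchwise-real tree is straightforward genericity.
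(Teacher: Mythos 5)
There is a genuine gap, and it is fatal to the core of your rigidity argument: the density claim in your third paragraph is false. Your contradiction needs a condition that promises the \emph{bounded} cut $\dot f(h)$ of $[\dot f(z),\dot f(w)]^X$ will never be filled. No condition in this forcing can make that promise, because it is incompatible with $\ol X$ being branchwise-real — the other half of the very theorem being proved. In a branchwise-real tree every bounded chain has a supremum: extend the chain together with its upper bound $\dot f(w)$ to a branch, which is order-isomorphic to a real interval, hence Dedekind complete, and the supremum computed there is a supremum in the tree (any other upper bound $u$ satisfies $u \wedge s = s$). Accordingly, the forcing's promises only attach to branches with no upper bound: in formulation (1) to maximal chains of the approximation — and the unique maximal chain through the cut $\dot f(h)$ contains $\dot f(w)$ and everything above it, so promising \emph{it} unextended says nothing about the gap; in formulation (2) to branches on which the $\R$-grading is unbounded — and the chain below $\dot f(h)$ is bounded by the grading value at $\dot f(w)$, so it is ineligible. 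Instead, it is dense to fill $h$ and $\dot f(h)$ \emph{simultaneously}, mapping filler to filler, and this is perfectly consistent with the decided fragment of $\dot f$; so meeting dense sets never breaks $\dot f$ at internal gaps. The point-promise trick of \cref{sec:weakly uniform} that you are adapting worked precisely because there \emph{both} holes get filled, one by a branching point and one by a point promised to stay non-branching; in a uniformly $\kappa$-branching tree every point has identical local structure, so no discrimination at internal gaps is possible — which is exactly why the paper switches from point-promises to branch-promises.

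The asymmetry must therefore be created at \emph{unbounded} branches, and that cannot be done by a single density argument below one condition: it requires an $\omega$-length descending chain of conditions. This is what the paper's proof does. It builds $p_0 > p_1 > \cdots$ together with points $x_0 < x_1 < \cdots$, where $x_{n+1}$ lies in the approximation $X_{n+1}$ at $p_{n+1}$ and, crucially, is \emph{not below any element of} $X_n$ (with bookkeeping so that $\dot f$-values, and implicitly $\dot f^{-1}$-values, on $X_n$ are decided within $X_{n+1}$). This escape clause guarantees that at the limit condition $p_\omega$, furnished by countable closure, both $\{x_0, x_1, \ldots\}$ and $\{\dot f(x_0), \dot f(x_1), \ldots\}$ determine branches with no upper bound in $\bigcup_{n\in\omega} X_n$; such branches are legitimately subject to extend/don't-extend decisions, and strengthening $p_\omega$ to extend the first while promising never to extend the second yields the contradiction. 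Your first two paragraphs (countable closure, genericity for branchwise-realness and uniform branching, the setup (i)–(iii), and the observation that $\dot f$ is determined on gap-fillers) are sound, but the final step cannot be repaired without replacing your internal-gap argument by this limit-stage fusion argument.
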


\begin{proof}[Proof sketch]
	The hard part is again showing that $\ol X$ is rigid. Take $\dot f \colon \ol X \to \ol X$ a non-trivial automorphism. By a countable-closure argument, we can find a condition $p_0$ forcing that $\dot f$ is a non-trivial order-preserving function whose domain contains the approximation $X_0$ at $p_0$ with $\dot f$ decided on every element of $X_0$, and with an element $x_0 \in X_0$ such that $\dot f(x_0)$ is incomparable with $x_0$. By another countable-closure argument, it is dense to find a condition $p_1 < p_0$ such that the domain of $\dot f$ contains the approximation $X_1$ at $p_1$ and such that there is $x_1 \in X_1$ not less than any element in $X_0$ with $x_0 < x_1$. Continuing in this way, we build a chain of forcing conditions $p_0 > p_1 > \cdots$ together with $x_0 < x_1 < \cdots$ in $\bigcup_{n \in \omega}X_n$. Let $p_\omega$ be the limit of the $p_n$'s. Then we can strengthen $p_\omega$ so that the branch determined by $\{x_0, x_1, \ldots\}$ gets extended, while we promise never to extend the branch determined by $\{\dot f(x_0), \dot f(x_1), \ldots\}$. (In the case where we keep track of an order-preserving function into \R, we would need to make a tweak to ensure that one sequence remains bounded while the other becomes unbounded.) Therefore, we can densely find a contradiction to the fact that $\dot f$ is order-preserving. \contradiction
\end{proof}

Turning back to our \ZFC\ constructions, the tree constructed in \cref{thm:strongly-uniform rigid} is continuously gradable. By suitably modifying the method, we can construct one which is not continuously gradable.

\begin{theorem}\label{thm:strongly-uniform rigid non gradable}
	Let $2 \leq \kappa \leq \cont^+$. There is a uniformly $\kappa$-branching, rigid branchwise-real tree which is not continuously gradable.
\end{theorem}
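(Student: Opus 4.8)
The plan is to adapt the construction of \cref{thm:strongly-uniform rigid}, replacing the bounded rank structure with one that forces non-gradability. Recall from \cref{rem:cont grad not implies bounded rank} that the obstruction to a bounded rank function — and the route to non-gradability — is the presence of a branch that passes through cofinally many ranks without a bounded rank function existing. So the key structural change is to run the construction for a \emph{longer} ordinal height, namely $\omega_1$ rather than $\omega^2$, so that the final tree $X$ has no bounded rank function. By \cref{lem:countable height gradable}'s proof, boundedness is exactly what gave continuous gradability before; here I want to engineer the opposite, so I would aim to show directly that $X$ admits no \R-grading at all, whence by \cref{thm:R grading to continuous grading} it is not continuously gradable.

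First I would set up the recursion essentially as in \cref{thm:strongly-uniform rigid}, using a $\cont^+$-sized family $\A$ of $\omega$-colourings (not merely $2$-colourings) of $(0,\infty)$ furnished by the Generic Colouring Lemma \ref{lem:generic family colourings} with $\lam = \omega$, partitioned into enough batches (now $\omega_1$-many of size $\cont^+$) so we never run out. At successor stages we add spines above every point of the top rank exactly as before, laying fresh colourings along them, maintaining uniform $\kappa$-branching and the running colouring $c \colon X \to \omega$. The crucial modification is at \emph{limit} stages: rather than using a ``tail of $1$'s'' criterion, I would use the richer $\omega$-colouring to decide extension in a way that (a) still permits the rigidity diagonalisation, and (b) ensures that along any branch that does get extended, the colours witness a pattern forcing any putative \R-grading to behave badly. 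Concretely, I would arrange that cofinally many branches are extended so that the resulting tree genuinely reaches height $\omega_1$, reusing the argument of \cref{rem:cont grad not implies bounded rank} to conclude no bounded rank function exists.

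The rigidity argument then proceeds exactly as in \cref{thm:strongly-uniform rigid}: given a non-trivial automorphism $f$ with $f(x_0) \neq x_0$, I use \cref{lem:component facts} to find a coinitial interval $I$ of a new spine above $x_0$ mapping to a coinitial interval $f(I)$ of a spine above $f(x_0)$, observe that $I$ and $f(I)$ carry \emph{different} colourings from $\A$, and apply the genericity of $\A$ to build an increasing sequence $x_0 < x_1 < \cdots$ whose colour pattern under $c$ triggers extension of the branch through the $x_n$ while the colour pattern of the images $f(x_n)$ does not trigger extension of the image branch — contradicting that $f$ is an automorphism. The $\kappa = 2$ case requires the same interpolation trick (picking an intermediate $x_n'$ with the right colour) as before. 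Having the $\omega$-colouring rather than a $2$-colouring gives extra room to make the extension criterion simultaneously serve rigidity and non-gradability.

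The main obstacle I anticipate is designing the limit-stage extension rule so that the two goals do not conflict: the rule must be ``generic enough'' that the rigidity diagonalisation always finds an extended/non-extended mismatch (this is why an $\omega$-colouring, with its larger palette, is used in place of the $2$-colouring), yet ``productive enough'' that genuinely many branches survive to height $\omega_1$, so that the tree has unbounded rank and the \cref{rem:cont grad not implies bounded rank} obstruction to \R-gradability actually manifests. Verifying that a branch surviving to cofinal height in $\omega_1$ obstructs \emph{every} \R-grading (not just the natural one), via \cref{thm:R grading to continuous grading}, is the delicate point; I would handle it by showing that any \R-grading would induce a strictly increasing real-valued function along such a branch indexed by an uncountable well-ordered set, which is impossible.
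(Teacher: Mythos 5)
Your overall skeleton matches the paper's proof: run the construction for $\omega_1$-many stages, replace $2$-colourings with colourings into $\omega\setminus\{0\}$ from the Generic Colouring Lemma \ref{lem:generic family colourings}, decide limit-stage extensions by a condition on the colour sequence (the paper's rule: extend iff no non-zero colour appears infinitely often), and rerun the rigidity diagonalisation with $c(x_n)=n$ against $c(f(x_n))=1$. The genuine gap is in your plan for non-gradability, and it is fatal in two respects. First, you propose to exhibit ``a branch surviving to cofinal height in $\omega_1$'' and let it obstruct every \R-grading. No such branch can exist in the final object: it would contain an increasing $\omega_1$-sequence, violating \ref{item:interval; def:brot}, so $X$ would fail to be a branchwise-real tree at all. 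The paper treats this as the central \emph{complication} of running to $\omega_1$, not as the mechanism for non-gradability: the limit rule and the placement of non-zero colours at successor-of-successor ranks are engineered precisely so that every branch dies at some countable stage (since $\omega\setminus\{0\}$ is countable, a branch of length $\omega_1$ would repeat some non-zero colour infinitely often and hence would have been refused extension at a countable limit). Second, the implication you fall back on — no bounded rank function, hence not continuously gradable — is false, and \cref{rem:cont grad not implies bounded rank} exists precisely to refute it: $U_\kappa$ admits no bounded rank function yet is continuously gradable. Since every individual branch of a branchwise-real tree is order-isomorphic to a real interval, no single branch can obstruct an \R-grading; non-gradability is a global phenomenon about grading all branches coherently at once.

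The missing idea is the paper's final step: realise a known non-\R-gradable well-stratified tree inside $X$. The paper takes the tree $T$ of \cref{lem:R-ungrad tree} (functions into $\omega$ on countable ordinals, injective on successors into $\omega\setminus\{0\}$, identically $0$ elsewhere, non-\R-gradable by the Baumgartner--Galvin--Laver argument) and recursively constructs an embedding $\iota \colon T \to X$ matching the rank of $r \in T$ with $\rho(\iota(r))$ and the values of $r$ with the colours $c(\iota(r))$. Because each $r \in T$ repeats no non-zero value, the colour sequence along the corresponding branch of $X$ repeats no non-zero colour, so that branch \emph{is} extended at limit stages and the embedding continues through all countable ranks. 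Any \R-grading of $X$ would then restrict to an \R-grading of $T$, a contradiction, and \cref{thm:R grading to continuous grading} converts this into the failure of continuous gradability. Without this (or some equivalent) device, nothing in your construction prevents the resulting tree from being continuously gradable, however long the recursion runs; this is also why the extension rule must do double duty — serving the rigidity diagonalisation \emph{and} admitting the copy of $T$ — rather than merely being ``generic enough'' and ``productive enough'' as you describe.
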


This time, the construction must proceed for $\omega_1$-many steps, because \cref{lem:countable height gradable} shows that any construction terminating after countably many steps would yield a continuously gradable tree. This then introduces a complication, as we need to ensure that there are no branches through the final tree which contain points of every rank below $\omega_1$. Indeed, such a branch would contain an $\omega_1$-sequence, violating condition \ref{item:interval; def:brot} that every branch is isomorphic to a real interval. Now, as mentioned in \cref{rem:cont grad not implies bounded rank}, not every tree constructed in $\omega_1$-many steps is non-continuously-gradable, so we also need to guarantee this. This is done by realising a known non-\R-gradable well-stratified tree inside the final branchwise-real tree, following techniques used in \cite{gradability-paper-published}. Lastly, these two competing requirements need to be balanced with the need for the final tree to be rigid. These desiderata are achieved following the method used in the proof of \cref{thm:strongly-uniform rigid} above, this time utilising $\omega$-gradings instead of $2$-gradings. At limit steps, the idea is essentially to extend a branch if and only if in the sequence of colours $(c_\alpha'(x_0), c_\alpha'(x_1), \ldots)$ of the maximal points of each rank, every element of $\omega$ appears only finitely often.

First, let us meet the non-\R-gradable well-stratified tree without uncountable branches which will be found inside the final branchwise-real tree.

\begin{lemma}\label{lem:R-ungrad tree}
	Let $T$ be the tree of functions $r$ into $\omega$, whose domain is a countable ordinal, such that when restricted to the set of successor ordinals $r$ is injective into $\omega \setminus \{0\}$, and elsewhere $r$ is identically $0$. We consider $T$ as a partial order under function extension. Then $T$ is a well-stratified tree without an \R-grading.
\end{lemma}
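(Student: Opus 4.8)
I would begin with the routine structural part. The root is the empty function (domain $0$), and for any $r \in T$ of domain $\gamma$ the set $\ds r$ is exactly $\{r\restr\delta \mid \delta \le \gamma\}$, linearly ordered by extension, with $r\restr\delta \mapsto \delta$ an order-isomorphism $\ds r \to \gamma+1$. Thus $T$ is a tree order, every branch is well-ordered, $T$ is well-stratified, and $\rank(r)=\dom(r)$. For the absence of uncountable branches: a branch is a maximal chain, hence determined by a single function $R$ whose domain is the supremum of the domains occurring; every proper restriction of $R$ to a countable ordinal lies in $T$. Were the branch uncountable, $R$ would be defined past $\omega_1$, and the injectivity-on-successors condition (inherited on each initial segment) would give an injection of the uncountably many successor ordinals below $\dom(R)$ into the countable set $\omega\setminus\{0\}$ — impossible. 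I would also record two facts for later: $T$ has height $\omega_1$ (a node of every countable rank is easily built), and $T$ is \emph{closed}, since the union of any chain is again $0$ on limits and injective on successors, hence lies in $T$ and bounds the chain.

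The substance is that $T$ has no $\R$-grading, which I would prove by showing that an $\R$-grading would make $T$ a countable union of antichains, while $T$ admits no such decomposition. For the first half, suppose $\ell\colon T\to\R$ is order-preserving and split the non-root nodes into those of successor rank and those of limit rank. A successor-rank node $r$ has an immediate predecessor $r^-$; pick $q_r\in(\ell(r^-),\ell(r))\cap\Q$. If $r<r'$ are both of successor rank then $r\le r'^-$, so $q_r<\ell(r)\le\ell(r'^-)<q_{r'}$, whence the nodes sharing a fixed value of $q_r$ form an antichain. The decisive observation is that a limit-rank node $t$ has a \emph{unique} immediate successor $t^+$ (the value forced at a limit position is $0$), so any $t'>t$ of limit rank satisfies $t'>t^+$; choosing $q_t\in(\ell(t),\ell(t^+))\cap\Q$ gives $q_t<\ell(t^+)\le\ell(t')<q_{t'}$, and again equal-$q_t$ limit nodes form an antichain. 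With the singleton $\{\text{root}\}$, this exhibits $T$ as countably many antichains.

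For the second half I would use closure and a density dichotomy. Assuming $T=\bigcup_{n<\omega}A_n$ with each $A_n$ an antichain, I build a strictly increasing chain $s_0<s_1<\cdots$ of non-terminal nodes: given $s_n$, if $A_n$ is not dense above $s_n$ I take $s_{n+1}>s_n$ with $\us{s_{n+1}}\cap A_n=\es$ (``killing'' $A_n$), and otherwise take $s_{n+1}\in A_n$ with $s_{n+1}>s_n$ (``hitting'' $A_n$). By closure, $t\defeq\bigcup_n s_n\in T$. Since $t\in A_m$ for some $m$, either $A_m$ was killed, whence $t\ge s_{m+1}$ contradicts $\us{s_{m+1}}\cap A_m=\es$, or $A_m$ was hit, whence $s_{m+1}<t$ with both in $A_m$ contradicts antichain-ness. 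Either way specialness fails, contradicting the previous paragraph, so no $\ell$ exists.

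\noindent\textbf{Main obstacle.} The key and least obvious point is that the ``$0$-at-limits'' convention gives each limit node a unique immediate successor, and this is precisely what lets a bare $\R$-grading — not a priori a $\Q$-grading — specialise $T$ by reading rationals off the gaps \emph{above} limit nodes; without it one obtains only an $\aleph_1$-sized antichain decomposition and no contradiction. The secondary delicate point is the bookkeeping in the dichotomy needed to keep the chain strictly increasing and clear of terminal nodes (those that have exhausted $\omega\setminus\{0\}$), which I would handle using that above any node there exist non-terminal nodes of arbitrarily large countable rank; I expect this to be routine but it must be addressed to ensure $t$ is a genuine limit node of $T$.
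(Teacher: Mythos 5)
Your first half --- that an \R-grading of $T$ yields a decomposition of $T$ into countably many antichains --- is correct, and you have isolated exactly the structural feature that makes it work: every node of limit rank (and the root) has a unique immediate successor, because the value at a limit position is forced to be $0$. The gap is in the second half. Your plan rests on the claim that above any node there are non-terminal nodes of arbitrarily large countable rank, and this is false: the node $r$ with $\dom(r)=\omega$, $r(0)=0$ and $r(n)=n+1$ for $1\le n<\omega$ leaves only the value $1$ unused, so every node above $r$ has rank at most $\omega+2$. More seriously, the kill-or-hit dichotomy itself fails against a concrete antichain. Let $A$ be the set of all maximal nodes of $T$ (those of successor domain that have exhausted $\omega\setminus\{0\}$); maximal nodes are pairwise incomparable, so $A$ is an antichain, and every node of $T$ has a maximal node above it (extend so as to use up all remaining values, then continue one step past the next limit position). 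Hence $A$ can never be killed --- $\us{u}\cap A\neq\es$ for every $u$ --- and hitting $A$ places $s_{n+1}$ on a maximal node, after which the construction cannot continue. Dodging this by observing that $t=\bigcup_n s_n$ has limit rank does not suffice either: the set of limit-rank nodes that have exhausted all values is again an antichain, it is dense above every node with infinitely many unused values, and its elements admit only boundedly many further extensions, so the same deadlock recurs one level up.

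What is needed is a further idea, not routine bookkeeping. Since $t$ has limit rank, and since (if you arrange it) $t$ leaves infinitely many values unused, you only ever need to kill or hit those elements of $A_n$ that are \emph{perfect}, i.e.\ of limit rank with infinitely many values unused; for perfect targets the dichotomy is sound, because perfect nodes can always be extended and can always be hit while remaining extendable. But to guarantee that $t$ itself is perfect you must run a fusion: the sets of unused values decrease along the chain, and a nested sequence of infinite sets can have finite intersection, so at each step you must reserve one additional value that all later $s_m$ (including those chosen inside antichains in the hit case) are forbidden to use, and the kill condition must be stated relative to extensions avoiding the current reserved set. With the dichotomy restricted to perfect nodes and this reservation scheme your argument goes through; as written, it fails. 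For comparison, the paper gives no self-contained argument at all: it identifies $T$ with $\Shift(\In_\omega)$ and cites the theorem of Baumgartner, Galvin and Laver, so a corrected version of your proof would in fact be a more self-contained route than the paper's.
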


\begin{proof}
	The tree $T$ is isomorphic to the tree $\Shift(\In_\omega)$ defined in Section 3 of \cite{gradability-paper-published}. There, using a proof due to Baumgartner, Gavin and Laver \cite{baumthesis}, it is shown that this tree is well-stratified, has no \R-grading, and moreover that every branch is countable.
\end{proof}

With this to hand, we can now prove the theorem.

\begin{proof}[Proof of \cref{thm:strongly-uniform rigid non gradable}]
	Let $\A$ be a $\cont^+$-sized family of $(\omega \setminus \{0\})$-colourings of $(0,\infty)$ as per the Generic Colouring Lemma \ref{lem:generic family colourings}. We construct $X$ in stages $X_\alpha$ for $\alpha < \omega_1$, colouring each spine appearing at a stage which is the successor of a successor using a different colouring from $\A$. As in \cref{thm:strongly-uniform rigid}, these colourings are used to determine which branches to extend at limit stages. This will ensure that the resulting tree $X$ is rigid, that every branch is isomorphic to a real interval and that there is no continuous grading.

	Proceed as in \cref{thm:strongly-uniform rigid}. First partition $\A$ into $\omega_1$-many batches of size $\cont^+$, so that we never run out. We build each $X_\alpha$ and colouring $c_\alpha$, together with the rank function $\rho$, recursively. Start with $X_0$ the singleton coloured $0$. As before, at successor stages add $(\kappa - 1)$-many new open spines above the points added at the previous stage. If we are at the successor of $0$ or a limit, colour the new spines all with $0$, otherwise colour each using new colourings from $\A$. Now assume that $\alpha < \omega_1$ is a limit. First let $X_\alpha' \defeq \bigcup_{\beta<\alpha} X_\beta$ and $c_\alpha' \defeq \bigcup_{\beta<\alpha} c_\beta$. We need to decide which new branches through $X_\alpha'$ to extend. As before, a branch $B$ is determined by its sequence $(x_0, x_1, \ldots)$ of maximum points of each rank $\beta<\alpha$. Extend the branch $B$ if and only if no non-zero colour appears infinitely often in the sequence $(c_\alpha'(x_0), c_\alpha'(x_1), \ldots)$. To extend a branch, add a new point above it coloured $0$. Let $X_\alpha$ and $c_\alpha$ be result of performing this operation on every branch in $X_\alpha'$.

	Finally, let $X \defeq \bigcup_{\alpha < \omega_1} X_\alpha$ and $c \defeq \bigcup_{\alpha < \omega_1} c_\alpha$. Note that at each stage we made sure that every point is branching of degree $\kappa$, so $X$ is uniformly $\kappa$-branching. To see that $X$ is branchwise-real, take any branch $B$ through $X$. Then $B$ is determined by its sequence $(x_0, x_1, \ldots)$ of maximum points of each rank $\beta<\omega_1$. Consider the sequence $(c_\alpha(x_0), c_\alpha(x_1), \ldots)$. Every colour indexed by the successor of a successor is non-zero, and every non-zero colour appears at most countably many times. Hence the sequence must have countable length. Therefore, the branch $B$ cannot contain a subset isomorphic to $\omega_1$, so it must be isomorphic to a real interval, as required.

	To show that $X$ is rigid, we follow the method in \cref{thm:strongly-uniform rigid}. If $f \colon X \to X$ is a non-trivial automorphism, we can pick $x_0$ such that $f(x_0) \neq x_0$. Note that if we consider the sequence of points below $x_0$, respectively $f(x_0)$, maximal in each rank, then no non-zero colour appears infinitely often. So to determine whether a branch containing $x_0$ gets extended at a limit, it suffices to consider the sequence of colours of points maximal in each rank lying \emph{above} $x_0$, and likewise for $f(x_0)$. As in \cref{thm:strongly-uniform rigid} we can construct a sequence $x_0 < x_1 < \cdots$ recursively, this time ensuring that $c(x_n) = n$ while $c(f(x_n)) = 1$, for each $n>2$. Note that while any of $x_0, x_1, f(x_0), f(x_1)$ may be coloured $0$, none of the rest of the points or their images can be, since they all have ranks which are the successor of a successor. Then the sequence $(x_0, x_1, \ldots)$ has a limit in $X$, while its image does not.

	Finally, to see that $X$ has no \R-grading, we realise the tree $T$ given by \cref{lem:R-ungrad tree} as a subtree. For this, we recursively construct an embedding $\iota \colon T \to X$ such that for every $r \in T$:
	\begin{itemize}
		\item $\rho(\iota(r)) = \rank(r)$, and
		\item $c(\iota(r)) = r(\alpha)$ when $\rank(r) = \alpha+1$.
	\end{itemize}
	Start by sending the root $\es$ to the root. 

	Take $r \in T$ and assume that we have defined $\iota(r)$. There are two cases. If $\rank(r)$ is $0$ or a limit, then $r$ has exactly one immediate successor $s$. Moreover $\rho(\iota(r))$ is $0$ or a limit, so by the way we constructed $X$ there are $\kappa$-many spines with base $\iota(r)$ added at stage $\rank(r)+1$, all of which are coloured $0$. Let $\iota(s)$ be any point on any one of these spines. The other case is when $\rank(r)$ is a successor. Then $r$ has $\kappa$-many successors. Moreover, there are $\kappa$-many spines with base $\iota(r)$ added at stage $\rank(r)+1$, each of which are coloured according to a colouring from $\A$. In particular, every non-zero colour appears on every spine. Place each successor $s$ of $r$ on a different spine, in such a way that $c(\iota(s)) = s(\rank(r))$. 

	Now take $r \in T$ with $\rank(r)$ a limit, and assume that we have defined $\iota(s)$ for all $s < r$. Consider the sequence $(\iota(r \restr 0), \iota(r \restr 1), \ldots)$. This determines a branch through $X'_{\rank(r)}$. Moreover, when determining whether to extend this branch, the sequence of colours used is:
	\begin{equation*}
	 	(c(\iota(r \restr 0)), c(\iota(r \restr 1)), \ldots) = (0, r(0), r(1), \ldots, 0, r(\omega), r(\omega+1), \ldots)
	\end{equation*}
	By definition of $T$, no non-zero colour appears more than once in this sequence. Therefore, the branch is extended, meaning that the sequence $(\iota(r \restr 0), \iota(r \restr 1), \ldots)$ has a supremum, which we can set as the image of $r$.

	Putting it all together, we obtain an embedding $\iota \colon T \to X$. Any continuous grading of the latter would restrict to an \R-grading of the former, which by \cref{lem:R-ungrad tree} is impossible.
\end{proof}

\section{Open questions}

\label{sec:open questions}

In this final section I present a number of questions left open by the preceding investigation.

The strongest result in this article is that there exists both a continuously gradable and a non-continuously-gradable uniformly $\kappa$-branching, rigid branchwise-real tree for $2 \leq \kappa \leq \cont^+$. This limit of $\cont^+$ comes from the Generic Colouring Lemma \ref{lem:generic family colourings}. It is natural to wonder if we could get past this limit using some alternative strategy.

\begin{question}\label{q:larger kappa}
	Does there exist a uniformly $\kappa$-branching, rigid branchwise-real tree for $\kappa > \cont^+$?
\end{question}

Now, it may be that such a question is beyond \ZFC, and that the best we can hope for is a consistency result. We might try expanding the Generic Colouring Lemma by forcing both a $2^\cont$-sized family of colourings and that $2^\cont > \cont^+$. Or alternatively we could try to force our rigid tree directly, for example by using a forcing notion similar to that presented in \cref{sec:strongly uniform}.

Taking a different tack, throughout the present article we have been aiming to eliminate all order-automorphisms. What if wanted more fine-grained control over the automorphism group? It is not hard to modify the methods presented here to yield a branchwise-real tree with automorphism group
\begin{enumerate*}[label=(\roman*)]
	\item the symmetries of $\kappa$ as a set,
	\item $\Z$,
	\item the order-automorphism group of the positive reals or
	\item various combinations of these.
\end{enumerate*}

\begin{question}
	What is the class of automorphism groups of (uniformly $\kappa$-branching) branchwise-real trees? 
\end{question}

Finally, we have seen four examples of uniformly $\kappa$-branching branchwise-real trees: the minimal $M_\kappa$, the universal $U_\kappa$, the rigid continuously gradable one and the rigid non-continuously-gradable one. None of these trees are isomorphic. Moreover, each comes with a rank function, the supremum of whose values is, respectively, $\omega$, $\omega_1$, $\omega^2$ and $\omega_1$. Clearly, any branchwise-real tree may admit many different rank functions, but is there some canonical way in which we might stratify uniformly $\kappa$-branching branchwise-real trees in terms of their `complexity'? One direction of investigation might be to first investigate how these tree embed into one another. We already know that $M_\kappa$ is a minimum and $U_\kappa$ a maximum, as least for the continuously gradable ones.

\begin{question}
	What is the nature of the class of uniformly $\kappa$-branching (continuously gradable) branchwise-real trees under embeddability?
\end{question}

Alternatively, we might consider the following isomorphism-invariant of branchwise-wise real trees $X$: what is the minimum supremum value of a rank function on $X$? For $M_\kappa$, this is $\omega$, and for any non-continuously gradable branchwise-real tree, this must be $\omega_1$ (by \cref{lem:countable height gradable}). It is not immediate that the uniformly $\kappa$-branching, rigid, continuously gradable branchwise-real tree constructed in \cref{sec:strongly uniform} has minimum supremum rank $\omega^2$; however it is not hard to see that its minimum cannot be $\omega$. This leads to the following questions.

\begin{question}
	Which ordinals arise as the minimum supremum value of a rank function on a uniformly $\kappa$-branching branchwise-real tree?
\end{question}

\section{Acknowledgements}

I would like to thank my supervisor, Joel David Hamkins, for much helpful guidance and proof-reading throughout the process of doing this research and writing the present article. I also wish to thank the anonymous reviewer, whose suggestions led to many improvements of the paper. This work was supported by the EPSRC [studentship with project reference \emph{2271793}].

	\printbibliography

@article{10.2307/1996262,
  issn      = {00029947},
  author    = {Thomas J. Jech},
  journal   = {Transactions of the American Mathematical Society},
  pages     = {57--70},
  publisher = {American Mathematical Society},
  title     = {Automorphisms of ω1-Trees},
  volume    = {173},
  year      = {1972}
}

@article{10.2307/40378073,
  issn      = {00224812, 19435886},
  abstract  = {We investigate various strong notions of rigidity for Souslin trees, separating them under ♢ into a hierarchy. Applying our methods to the automorphism tower problem in group theory, we show under ♢ that there is a group whose automorphism tower is highly malleable by forcing.},
  author    = {Gunter Fuchs and Joel David Hamkins},
  journal   = {The Journal of Symbolic Logic},
  number    = {2},
  pages     = {423--454},
  publisher = {[Association for Symbolic Logic, Cambridge University Press]},
  title     = {Degrees of Rigidity for Souslin Trees},
  volume    = {74},
  year      = {2009},
  doi       = {10.2178/jsl/1243948321}
}

@article{Abraham1985IsomorphismTO,
  title   = {Isomorphism types of {A}ronszajn trees},
  author  = {Uri Abraham and Saharon Shelah},
  journal = {Israel Journal of Mathematics},
  year    = {1985},
  volume  = {50},
  pages   = {75-113}
}

@article{avraham79,
  author       = {Uri Avraham},
  title        = {Construction of a rigid {A}ronszajn tree},
  journaltitle = {Proceedings of the American Mathematical Society},
  year         = {1979},
  volume       = {77},
  number       = {1}
}

@phdthesis{baumthesis,
  author      = {Baumgartner, James E.},
  title       = {Results and Independence Proofs in Combinatorial Set Theory},
  institution = {University of California},
  year        = {1970},
  isbn        = {9798659828038}
}

@article{berestovskii2019urysohn,
  title     = {On Urysohn’s R-Tree},
  author    = {Berestovskii, Valerii Nikolaevich},
  journal   = {Siberian Mathematical Journal},
  volume    = {60},
  number    = {1},
  pages     = {10--19},
  year      = {2019},
  publisher = {Springer}
}

@misc{bestvina1997real,
  title         = {Real trees in topology, geometry, and group theory},
  author        = {Mladen Bestvina},
  year          = {1997},
  eprint        = {math/9712210},
  archiveprefix = {arXiv},
  primaryclass  = {math.GT}
}

@book{comb-set-theory-2017,
  author    = {Lorenz J. Halbeisen},
  publisher = {Springer Cham},
  title     = {Combinatorial Set Theory},
  year      = {2017},
  subtitle  = {With a Gentle Introduction to Forcing},
  isbn      = {978-3-319-60230-1},
  doi       = {10.1007/978-3-319-60231-8},
  series    = {Springer Monographs in Mathematics},
  edition   = {2}
}

@article{dyubina01,
  author        = {Dyubina, Anna and Polterovich, Iosif},
  title         = {{Explicit Constructions of Universal R-Trees and Asymptotic Geometry of Hyperbolic Spaces}},
  journal       = {Bulletin of the London Mathematical Society},
  volume        = {33},
  number        = {6},
  pages         = {727-734},
  year          = {2001},
  month         = {11},
  eprint        = {math/9904133},
  archiveprefix = {arXiv},
  primaryclass  = {math.DG}
}

@article{Fabel15,
  author       = {Fabel, Paul},
  title        = {A Topological Characterization of the Underlying Spaces of Complete R-Trees},
  journaltitle = {Michigan Mathematical Journal},
  year         = {2015},
  number       = {64},
  pages        = {881--887},
  doi          = {10.1307/mmj/1447878035}
}

@book{favre2002valuative,
  series        = {Lecture notes in mathematics},
  number        = {1853},
  publisher     = {Springer},
  isbn          = {9783540229841},
  year          = {2004},
  title         = {The valuative tree},
  address       = {Berlin; London},
  author        = {Favre, Charles and Jonsson, Mattias},
  keywords      = {Trees (Graph theory); Algorithms},
  doi           = {10.1007/b100262},
  eprint        = {math/0210265},
  archiveprefix = {arXiv},
  primaryclass  = {math.AC}
}

@article{gaifmanspecker64,
  author  = {Gaifman, Haim and Specker, E.},
  year    = {1964},
  month   = {02},
  pages   = {1-1},
  title   = {Isomorphism Types of Trees},
  volume  = {15},
  isbn    = {978-3-0348-9966-6},
  journal = {Proceedings of The American Mathematical Society},
  doi     = {10.1090/S0002-9939-1964-0168484-2}
}

@article{gradability-paper-published,
  title         = {On the continuous gradability of the cut-point orders of R-trees},
  journal       = {Topology and its Applications},
  volume        = {306},
  pages         = {107937},
  year          = {2022},
  issn          = {0166-8641},
  doi           = {10.1016/j.topol.2021.107937},
  author        = {Sam Adam-Day},
  keywords      = {R-tree, Branchwise-real tree order, Continuous grading, Road space, Suslin tree, Martin's Axiom, Independence result, Separable, Countable chain condition},
  abstract      = {An R-tree is a certain kind of metric space tree in which every point can be branching. Favre and Jonsson posed the following problem in 2004: can the class of orders underlying R-trees be characterised by the fact that every branch is order-isomorphic to a real interval? In the first part, I answer this question in the negative: there is a ‘branchwise-real tree order’ which is not ‘continuously gradable’. In the second part, I show that a branchwise-real tree order is continuously gradable if and only if every well-stratified subtree is R-gradable. This link with set theory is put to work in the third part answering refinements of the main question, yielding several independence results. For example, when κ⩾c, there is a branchwise-real tree order which is not continuously gradable, and which satisfies a property corresponding to κ-separability. Conversely, under Martin's Axiom at κ such a tree does not exist.},
  eprint        = {2107.14718},
  archiveprefix = {arXiv},
  primaryclass  = {math.LO}
}

@book{jech,
  author    = {Jech, Thomas J.},
  title     = {Set Theory},
  year      = {2003},
  edition   = {The Third Millennium Edition},
  series    = {Springer Monographs in Mathematics},
  publisher = {Springer-Verlag},
  isbn      = {978-3-540-44085-7},
  doi       = {10.1007/3-540-44761-X}
}

@article{jensen69,
  author       = {Ronald Björn Jensen},
  title        = {Automorphism properties of {S}ouslin continua},
  journaltitle = {Notices of the American Mathematical Society},
  year         = {1969},
  volume       = {16},
  pages        = {576}
}

@book{kunen,
  series    = {Studies in logic},
  number    = {34},
  isbn      = {9781848900509},
  year      = {2013},
  title     = {Set theory},
  edition   = {Revised edition},
  publisher = {College Publications},
  address   = {London},
  author    = {Kunen, Kenneth},
  keywords  = {Axiomatic set theory}
}

@article{mayer92,
  issn      = {00029947},
  abstract  = {R-trees arise naturally in the study of groups of isometries of hyperbolic space. An R-tree is a uniquely arcwise connected metric space in which each arc is isometric to a subarc of the reals. It follows that an R-tree is locally arcwise connected, contractible, and one-dimensional. Unique and local arcwise connectivity characterize R-trees among metric spaces. A universal R-tree would be of interest in attempting to classify the actions of groups of isometries on R-trees. It is easy to see that there is no universal R-tree. However, we show that there is a universal separable R-tree Tℵ0 . Moreover, for each cardinal α, 3 ≤ α ≤ ℵ0, there is a space $T_\alpha \subset T_{\aleph_0}$, universal for separable R-trees, whose order of ramification is at most α. We construct a universal smooth dendroid D such that each separable R-tree embeds in D; thus, has a smooth dendroid compactification. For nonseparable R-trees, we show that there is an R-tree Xα, such that each R-tree of order of ramification at most α embeds isometrically into Xα. We also show that each R-tree has a compactification into a smooth arboroid (a nonmetric dendroid). We conclude with several examples that show that the characterization of R-trees among metric spaces, rather than, say, among first countable spaces, is the best that can be expected.},
  author    = {John C. Mayer and Jacek Nikiel and Lex G. Oversteegen},
  journal   = {Transactions of the American Mathematical Society},
  number    = {1},
  pages     = {411--432},
  publisher = {American Mathematical Society},
  title     = {Universal Spaces for R-Trees},
  volume    = {334},
  year      = {1992},
  doi       = {10.2307/2153989}
}

@book{nikiel1989topologies,
  title     = {Topologies on pseudo-trees and applications},
  author    = {Nikiel, Jacek},
  volume    = {416},
  year      = {1989},
  publisher = {American Mathematical Society}
}

@inproceedings{REMARKSONTHENORMALMOORESPACEMETRIZATIONPROBLEM,
  author    = {P. Burton Jones},
  title     = {Remarks on the normal {M}oore space metrization problem},
  booktitle = {Topology Seminar Wisconsin},
  year      = {1965},
  volume    = {60},
  publisher = {Princeton University Press},
  address   = {Princeton},
  isbn      = {9781400882076},
  doi       = {10.1515/9781400882076-015},
  pages     = {115--120}
}

@article{UrysohnBeispielEN,
  title    = {Beispiel eines nirgends separablen metrischen Raumes},
  author   = {Paul Urysohn},
  journal  = {Fundamenta Mathematicae},
  volume   = {9},
  pages    = {119--121},
  year     = {1927},
  language = {german}
}

@article{cantor-1895,
  author          = {Georg Cantor},
  journal         = {Mathematische Annalen},
  title           = {Beiträge zur Begründung der transfiniten Mengenlehre},
  volume          = {46},
  year            = {1895},
  pages           = {481--512},
}

\end{document}